\documentclass[12pt]{article}

\usepackage[T1]{fontenc}
\usepackage[margin=1in]{geometry}

\usepackage{amsmath,amssymb,amsthm,mathtools}
\usepackage[dvipsnames]{xcolor}
\usepackage{microtype}
\usepackage{float}
\usepackage{tikz}
\usepackage[
    colorlinks=true,
    linkcolor=MidnightBlue,
    citecolor=MidnightBlue,
    urlcolor=MidnightBlue,
    filecolor=MidnightBlue
]{hyperref}
\newtheorem{theorem}{Theorem}
\newtheorem{lemma}{Lemma}
\newtheorem{corollary}{Corollary}

\theoremstyle{remark}
\newtheorem{remark}{Remark}

\newcommand{\R}{\mathbb{R}}

\newcommand{\nullity}{\operatorname{null}}
\newcommand{\mult}{\operatorname{mult}}

\begin{document}

\title{Multiplicity bounds for nonzero upper-Laplacian eigenvalues of simplicial complexes}

\author{Vinayak Gupta\thanks{Department of Mathematics, Indian Institute of
Technology Guwahati, Guwahati, India. Email:
\texttt{vinayakgupta1729v@gmail.com}.}}
\date{}

\hypersetup{
    pdftitle={Multiplicity bounds for nonzero upper-Laplacian eigenvalues of simplicial complexes},
    pdfauthor={Vinayak Gupta}
}

\maketitle

\begin{abstract}
Let $K$ be a finite pure ridge-connected $d$-dimensional simplicial complex
with at least two facets, and let $m_K(\lambda)$ denote the multiplicity of a
positive eigenvalue $\lambda$ of its upper Laplacian
$L_{d-1}^{\mathrm{up}}(K)$.  We obtain upper bounds for $m_K(\lambda)$ in
terms of the basic combinatorial parameters $f_d(K)$ and $f_{d-1}(K)$, the
numbers of facets and ridges, $p_d(K)$, the number of pendant facets, and
$q_d(K)$, the number of quasi-pendant ridges.  The quantity
$\theta_d(K)=d f_d(K)-f_{d-1}(K)+1$ arises as the cyclomatic number of a
natural reduced facet--ridge incidence graph.  We prove
\[
m_K(\lambda)\le 2\theta_d(K)+p_d(K),
\qquad
m_K(\lambda)\le 2\theta_d(K)+q_d(K)\quad(\lambda\ne d).
\]
We also characterize the complexes for which equality is attained in these bounds.  Finally, for every finite pure $d$-dimensional simplicial complex, we prove the sharp inequality $m_K(d)\ge p_d(K)-q_d(K)$, which is a higher-dimensional analogue of Faria's inequality.  For $d=1$, our results reduce to the corresponding multiplicity bounds for graph Laplacians.
\end{abstract}

\medskip
\noindent\textbf{Keywords:} Upper Laplacian; eigenvalue multiplicity; simplicial complex; pendant facet; ridge-connected complex; Faria inequality.

\noindent\textbf{2020 Mathematics Subject Classification:} 05E45, 05C50, 15A18.

\section{Introduction}

Spectral graph theory seeks to relate the eigenvalues of graph matrices to the
structure of the underlying graph.  A basic question in this direction is how
combinatorial parameters constrain eigenvalue multiplicities.

Let $G$ be a graph, let $L(G)=D(G)-A(G)$ be its Laplacian matrix, and write
$m_G(\lambda)$ for the multiplicity of a Laplacian eigenvalue $\lambda$.  If
$G$ is connected, let $c(G)=|E(G)|-|V(G)|+1$ be its cyclomatic number, and let
$p(G)$ and $q(G)$ be the numbers of pendant and quasi-pendant vertices,
respectively.  Grone, Merris and Sunder~\cite{GMS} proved that
$m_T(\lambda)\le p(T)-1$ for every Laplacian eigenvalue $\lambda$ of a tree
$T$.  For a general connected graph,
$m_G(\lambda)\le2c(G)+p(G)$, and the sharper bound
$m_G(\lambda)\le2c(G)+q(G)$ holds when $\lambda\ne1$; see
\cite{ChenGuoWang,GuptaQuasi}.  At the exceptional eigenvalue $1$, Faria's
inequality~\cite{Faria} gives $m_G(1)\ge p(G)-q(G)$.  Related multiplicity
results may be found in
\cite{Barik,GuoFengZhang,Andrade,AkbariKianiMirzakhah,TianWangSong} and the
references therein.

We develop higher-dimensional analogues of these results.  Let $K$ be a finite
pure $d$-dimensional simplicial complex.  Its $d$-faces are called facets and
its $(d-1)$-faces are called ridges.  We write $K_i$ for the set of
$i$-dimensional faces and $f_i(K)=|K_i|$.  After orientations are chosen, the
boundary matrix $\partial_d$ is the signed ridge--facet incidence matrix.  The
upper Laplacian acting on the ridges is
$L_{d-1}^{\mathrm{up}}(K)=\partial_d\partial_d^{\mathsf T}$, and
$m_K(\lambda)$ denotes the multiplicity of its positive eigenvalue $\lambda$.
When $d=1$, this is the ordinary graph Laplacian.

The spectra of simplicial Laplacians have been studied from several
viewpoints.  Duval and Reiner~\cite{DuvalReiner} determined the Laplacian
spectra of shifted simplicial complexes and proved their integrality.  Horak
and Jost~\cite{HorakJost} developed weighted and normalized Laplace operators
for simplicial complexes, while Steenbergen, Klivans and
Mukherjee~\cite{SteenbergenKlivansMukherjee} established a Cheeger-type
relation between small positive eigenvalues and coboundary expansion.  Song,
Wu and Fan~\cite{SongWuFan} studied the multiplicity of the extremal
eigenvalue $i+2$ of the normalized $i$th upper Laplacian, and Fan, Wu and
Wang~\cite{FanWuWang} obtained bounds for the largest combinatorial
upper-Laplacian eigenvalue.

The eigenvalue zero is closely connected with topology: its multiplicity for
the upper Laplacian is $f_{d-1}(K)-\operatorname{rank}\partial_d$, while the
kernel of the full Hodge Laplacian represents simplicial homology.
Parzanchevski and Rosenthal~\cite{ParzanchevskiRosenthal} related homology and
the upper-Laplacian spectral gap to random walks on oriented faces, and Ponoi
and Montagantirud~\cite{PonoiMontagantirud} studied random walks through a
normalized Hodge Laplacian.  In contrast, our concern is the multiplicity of
an arbitrary nonzero upper-Laplacian eigenvalue.

We use the following higher-dimensional analogues of the graph parameters
above.  A ridge is \emph{free} if it belongs to exactly one facet.  A facet is
\emph{pendant} if exactly $d$ of its $d+1$ ridges are free, and $p_d(K)$
denotes the number of pendant facets.  A nonfree ridge is
\emph{quasi-pendant} if it belongs to at least one pendant facet, and $q_d(K)$
denotes the number of such ridges.  For a pure ridge-connected complex, set
\[
\theta_d(K)=d f_d(K)-f_{d-1}(K)+1.
\]
When $d=1$, these quantities reduce to $c(G)$, $p(G)$ and $q(G)$,
respectively.

Our main upper bounds are
\[
m_K(\lambda)\le2\theta_d(K)+p_d(K),
\qquad
m_K(\lambda)\le2\theta_d(K)+q_d(K)\quad(\lambda\ne d).
\]
Here $K$ is ridge-connected, has at least two facets, and $\lambda>0$.  We
sharpen the first bound by one except when
$(\theta_d(K),p_d(K))=(1,0)$, and characterize equality in both bounds.  The
extremal complexes are simplicial cycles for the first bound, and simplicial
cycles or ridge-stars for the second.  We also determine the equality
eigenvalues for simplicial cycles.  Finally, for every finite pure
$d$-dimensional simplicial complex, we prove the sharp Faria-type inequality
$m_K(d)\ge p_d(K)-q_d(K)$.  For $d=1$, our results recover the corresponding
graph bounds.

\section{Preliminaries and simplicial notation}

We first recall the basic terminology and operators used throughout the paper.

A finite abstract simplicial complex $K$ on a finite vertex set $V$ is a collection of subsets of $V$ such that every subset of a member of $K$ also belongs to $K$.
The elements of $K$ are called \emph{faces}.  A face $\sigma$ has dimension
\(
|\sigma|-1,
\)
and
\[
\dim K=\max_{\sigma\in K}\dim\sigma.
\]
An inclusion-maximal face is called a \emph{facet}.  The complex is \emph{pure of dimension $d$} if every facet has dimension $d$.  We write $K_i$ for the set of $i$-dimensional faces and
\[
f_i(K)=|K_i|.
\]

Choose an orientation of every face of $K$.  For $i\ge0$, let $C_i(K;\R)$ be the real vector space with the oriented $i$-faces as an orthonormal basis.  For $i\ge1$, the boundary operator
\[
\partial_i:C_i(K;\R)\longrightarrow C_{i-1}(K;\R)
\]
is defined by
\[
\partial_i[v_0,\ldots,v_i]
=\sum_{j=0}^i(-1)^j
[v_0,\ldots,\widehat{v_j},\ldots,v_i].
\]
We set $\partial_0=0$.  With respect to the oriented-face bases, the same symbol $\partial_i$ denotes the boundary matrix.

The \emph{upper Laplacian} and \emph{down Laplacian} acting on $i$-chains are
\[
L_i^{\mathrm{up}}(K)
=\partial_{i+1}\partial_{i+1}^{\mathsf T},
\qquad
L_i^{\mathrm{down}}(K)
=\partial_i^{\mathsf T}\partial_i,
\]
respectively.  Their sum
\[
L_i(K)
=L_i^{\mathrm{up}}(K)+L_i^{\mathrm{down}}(K)
\]
is the $i$-dimensional combinatorial Laplacian.  This paper studies
\(
L_{d-1}^{\mathrm{up}}(K)
=\partial_d\partial_d^{\mathsf T}.
\)
For each positive eigenvalue $\lambda$ of this matrix, its multiplicity is denoted by $m_K(\lambda)$.

These Laplacians are independent of the chosen orientations up to conjugation by diagonal matrices with entries in $\{1,-1\}$, and hence their spectra are orientation-independent.

A graph $G$ is a one-dimensional simplicial complex whose vertices and edges are its $0$- and $1$-faces.  Its boundary matrix $\partial_1$ is an oriented vertex--edge incidence matrix, and therefore
\[
L_0^{\mathrm{up}}(G)
=\partial_1\partial_1^{\mathsf T}
=D(G)-A(G)
=L(G),
\]
where $D(G)$ and $A(G)$ are the degree and adjacency matrices.  For a
Laplacian eigenvalue $\lambda$ of $G$, we write $m_G(\lambda)$ for its
multiplicity.  Thus the upper Laplacian of a simplicial complex generalizes
the ordinary graph Laplacian.

A $(d-1)$-face is called a \emph{ridge}.  Its upper degree is
\[
\deg_K(\tau)=|\{F\in K_d:\tau\subset F\}|.
\]
A ridge is \emph{free} if $\deg_K(\tau)=1$.

The complex $K$ is \emph{ridge-connected} if for every pair of facets $F,F'$ there are facets
\[
F=F_0,F_1,\ldots,F_t=F'
\]
such that $F_{i-1}$ and $F_i$ share a ridge for each $i$.

A $d$-facet $F$ is \emph{pendant} if exactly $d$ of its $d+1$ ridges are free; equivalently, $F$ contains exactly one nonfree ridge.  Let
\[
p_d(K)=|\{F\in K_d:F\text{ is pendant}\}|.
\]

A nonfree ridge $\tau$ is \emph{quasi-pendant} if it is contained in at least one pendant facet.  Let
\[
q_d(K)=|\{\tau\in K_{d-1}:\tau\text{ is quasi-pendant}\}|.
\]
Every pendant facet determines exactly one quasi-pendant ridge, whereas several pendant facets may determine the same ridge.  Consequently,
\[
q_d(K)\le p_d(K).
\]

For a pure ridge-connected $d$-complex $K$, define
\[
\theta_d(K)=d f_d(K)-f_{d-1}(K)+1.
\]

A pure ridge-connected $d$-complex $K$ is called a \emph{simplicial cycle} if its facets can be cyclically ordered as
\[
F_1,F_2,\ldots,F_m,\qquad m\ge3,
\]
and there are distinct ridges $R_1,\ldots,R_m$ such that, with indices taken modulo $m$,
\begin{enumerate}
\item[(i)] $R_i=F_i\cap F_{i+1}$;
\item[(ii)] $R_i$ is contained in exactly the two facets $F_i$ and $F_{i+1}$;
\item[(iii)] $R_1,\ldots,R_m$ are all the nonfree ridges of $K$.
\end{enumerate}
Thus each facet $F_i$ has exactly two nonfree ridges, namely $R_{i-1}$ and $R_i$, and its remaining $d-1$ ridges are free.

A pure ridge-connected $d$-dimensional simplicial complex $K$ with at least two facets is called a \emph{ridge-star} if there is a ridge $R$ contained in every facet and every ridge other than $R$ is free.  A ridge-star is the higher-dimensional analogue of an ordinary star tree: when $d=1$, the common ridge $R$ is the central vertex and the facets containing it are the edges of the star.

\begin{figure}[H]
\centering
\begin{tikzpicture}[scale=0.7,
    vertex/.style={circle,fill=black,inner sep=2.2pt},
    lab/.style={font=\small},
    facetlab/.style={font=\large}
]

\begin{scope}[shift={(-5.1,0)}]
\coordinate (v0) at (0,1.5);
\coordinate (v1) at (0,-1.5);
\coordinate (v2) at (-3.2,2.4);
\coordinate (v3) at (3.2,2.2);
\coordinate (v4) at (4.0,-0.4);
\coordinate (v5) at (2.7,-2.8);
\coordinate (v6) at (-3.3,-2.3);

\fill[cyan!55,opacity=.72]    (v0)--(v1)--(v2)--cycle;
\fill[green!55,opacity=.72]   (v0)--(v1)--(v3)--cycle;
\fill[yellow!75,opacity=.72]  (v0)--(v1)--(v4)--cycle;
\fill[orange!70,opacity=.72]  (v0)--(v1)--(v5)--cycle;
\fill[magenta!55,opacity=.72] (v0)--(v1)--(v6)--cycle;

\draw (v0)--(v2)--(v1);
\draw (v0)--(v3)--(v1);
\draw (v0)--(v4)--(v1);
\draw (v0)--(v5)--(v1);
\draw (v0)--(v6)--(v1);
\draw[line width=2.2pt] (v0)--(v1);

\foreach \x in {v0,v1,v2,v3,v4,v5,v6}{\node[vertex] at (\x) {};}
\node[lab,above right] at (v0) {$0$};
\node[lab,below right] at (v1) {$1$};
\node[lab,above left]  at (v2) {$2$};
\node[lab,above right] at (v3) {$3$};
\node[lab,right]       at (v4) {$4$};
\node[lab,below right] at (v5) {$5$};
\node[lab,below left]  at (v6) {$6$};

\node[facetlab] at (-1.75,1.30) {$F_1$};
\node[facetlab] at (1.75,1.20)  {$F_2$};
\node[facetlab] at (2.20,-0.20) {$F_3$};
\node[facetlab] at (1.50,-1.55) {$F_4$};
\node[facetlab] at (-1.80,-1.25) {$F_5$};
\node[font=\large] at (0,-3.45) {(a) Ridge-star};
\end{scope}

\begin{scope}[shift={(5.4,0)}]
\coordinate (u0) at (0,0);
\coordinate (u1) at (0,2.75);
\coordinate (u2) at (2.65,0.85);
\coordinate (u3) at (1.65,-2.25);
\coordinate (u4) at (-1.65,-2.25);
\coordinate (u5) at (-2.65,0.85);

\fill[cyan!55,opacity=.72]    (u0)--(u1)--(u2)--cycle;
\fill[green!55,opacity=.72]   (u0)--(u2)--(u3)--cycle;
\fill[yellow!75,opacity=.72]  (u0)--(u3)--(u4)--cycle;
\fill[orange!70,opacity=.72]  (u0)--(u4)--(u5)--cycle;
\fill[magenta!55,opacity=.72] (u0)--(u5)--(u1)--cycle;

\draw (u1)--(u2)--(u3)--(u4)--(u5)--cycle;
\draw[line width=1.5pt] (u0)--(u1) (u0)--(u2) (u0)--(u3) (u0)--(u4) (u0)--(u5);

\foreach \x in {u0,u1,u2,u3,u4,u5}{\node[vertex] at (\x) {};}
\node[lab,above right] at (u0) {$0$};
\node[lab,above]       at (u1) {$1$};
\node[lab,above right] at (u2) {$2$};
\node[lab,below right] at (u3) {$3$};
\node[lab,below left]  at (u4) {$4$};
\node[lab,above left]  at (u5) {$5$};

\node[facetlab] at (0.85,1.15) {$G_1$};
\node[facetlab] at (1.25,-0.45) {$G_2$};
\node[facetlab] at (0,-1.35) {$G_3$};
\node[facetlab] at (-1.25,-0.45) {$G_4$};
\node[facetlab] at (-0.85,1.15) {$G_5$};
\node[font=\large] at (0,-3.45) {(b) Simplicial cycle};
\end{scope}

\end{tikzpicture}
\par\medskip
\begin{tikzpicture}[scale=1.25,
    vertex/.style={circle,fill=black,inner sep=2.2pt},
    lab/.style={font=\small},
    facetlab/.style={font=\large}
]

\coordinate (w1)  at (0,0);
\coordinate (w2)  at (0.9,1.56);
\coordinate (w3)  at (1.8,0);
\coordinate (w4)  at (2.7,1.56);
\coordinate (w5)  at (3.6,0);
\coordinate (w1b) at (4.5,1.56);
\coordinate (w2b) at (5.4,0);

\fill[cyan!55,opacity=.72]    (w1)--(w2)--(w3)--cycle;
\fill[green!55,opacity=.72]   (w2)--(w3)--(w4)--cycle;
\fill[yellow!75,opacity=.72]  (w3)--(w4)--(w5)--cycle;
\fill[orange!70,opacity=.72]  (w4)--(w5)--(w1b)--cycle;
\fill[magenta!55,opacity=.72] (w5)--(w1b)--(w2b)--cycle;

\draw (w1)--(w2)--(w3)--cycle;
\draw (w2)--(w3)--(w4)--cycle;
\draw (w3)--(w4)--(w5)--cycle;
\draw (w4)--(w5)--(w1b)--cycle;
\draw (w5)--(w1b)--(w2b)--cycle;

\draw[line width=1.6pt] (w2)--(w3) (w3)--(w4)
                         (w4)--(w5) (w5)--(w1b);
\draw[red!75!black,line width=2.2pt] (w1)--(w2);
\draw[red!75!black,line width=2.2pt] (w1b)--(w2b);

\foreach \x in {w1,w2,w3,w4,w5,w1b,w2b}{\node[vertex] at (\x) {};}
\node[lab,below left] at (w1) {$1$};
\node[lab,above]      at (w2) {$2$};
\node[lab,below]      at (w3) {$3$};
\node[lab,above]      at (w4) {$4$};
\node[lab,below]      at (w5) {$5$};
\node[lab,above]      at (w1b) {$1$};
\node[lab,below right] at (w2b) {$2$};

\node[facetlab] at (0.90,0.52) {$H_1$};
\node[facetlab] at (1.80,1.04) {$H_2$};
\node[facetlab] at (2.70,0.52) {$H_3$};
\node[facetlab] at (3.60,1.04) {$H_4$};
\node[facetlab] at (4.50,0.52) {$H_5$};

\node[red!75!black,lab] at (-0.10,0.45) {$R_5$};
\node[red!75!black,lab] at (5.50,0.45) {$R_5$};
\node[font=\large] at (2.70,-0.72) {(c) Simplicial cycle without a common vertex};
\end{tikzpicture}

\caption{Three $2$-dimensional examples with five  facets: a ridge-star, a simplicial cycle with a common vertex, and an unfolded simplicial cycle with no vertex common to all facets.}
\label{fig:simplicial-examples}
\end{figure}

\begin{remark}
In the ridge-star in Figure~\ref{fig:simplicial-examples}(a), the facets are
\[
F_i=\{0,1,i+1\},\qquad 1\le i\le5.
\]
The bold vertical edge is the common ridge $R=\{0,1\}$, and every other edge is free.  In the simplicial cycle in Figure~\ref{fig:simplicial-examples}(b), the facets are
\[
G_i=\{0,i,i+1\}\quad(1\le i\le4),
\qquad
G_5=\{0,5,1\}.
\]
The five bold radial edges are precisely the nonfree ridges, while the five outer edges are free.

Figure~\ref{fig:simplicial-examples}(c) gives a simplicial cycle with facets
\[
H_1=\{1,2,3\},\quad H_2=\{2,3,4\},\quad
H_3=\{3,4,5\},\quad H_4=\{4,5,1\},\quad
H_5=\{5,1,2\}.
\]
It is drawn after cutting along the ridge $\{1,2\}$; the two red copies of that ridge are identified to close the cycle.  This complex is a triangulation of the M\"obius strip, and it has no vertex common to all facets, since $\bigcap_{i=1}^5H_i=\varnothing$.
\end{remark}

\subsection{Notation}

For a matrix $M$, the symbol $M^*$ denotes its conjugate transpose, and $\mult_M(\nu)$ denotes the algebraic multiplicity of an eigenvalue $\nu$.  The matrices $I_n$ and $J_n$ are the $n\times n$ identity matrix and all-ones matrix, respectively; their subscripts are omitted when their sizes are clear.

\section{Hermitian graph-matrix input}

We recall the matrix theorem that will be used in the proof.  If $G$ is a simple graph on $n$ vertices, let $\mathcal S(G)$ be the set of Hermitian matrices $A=(a_{ij})$ satisfying
\[
a_{ij}\ne0\quad\Longleftrightarrow\quad ij\in E(G),\qquad i\ne j,
\]
with no restriction on the diagonal entries.

\begin{theorem}[Chen--Guo--Wang \cite{ChenGuoWang}]\label{thm:CGW}
Let $G$ be a connected graph and let $A\in\mathcal S(G)$.  For every real eigenvalue $\nu$ of $A$,
\[
\mult_A(\nu)\le 2c(G)+p(G).
\]
Equality holds if and only if $G$ is a cycle and $\mult_A(\nu)=2$.  Consequently, if $G$ is not a cycle,
\[
\mult_A(\nu)\le 2c(G)+p(G)-1.
\]
\end{theorem}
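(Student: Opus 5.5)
The plan is to argue by induction on the cyclomatic number $c(G)$, using two elementary perturbation facts for Hermitian matrices:
\begin{enumerate}
\item[(a)] deleting a vertex $v$ (passing to the principal submatrix $A(v)$) changes $\mult(\nu)$ by at most $1$, by Cauchy interlacing;
\item[(b)] deleting an edge $uv$ (replacing $a_{uv},a_{vu}$ by $0$) is a Hermitian perturbation of rank at most $2$, so it changes $\mult(\nu)$ by at most $2$.
\end{enumerate}
Throughout I assume $|V(G)|\ge2$; for a single vertex the bound $2c+p=0$ fails trivially.

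\emph{Base case $c(G)=0$.} Here $G=T$ is a tree with $p(T)\ge2$, and I would prove $\mult_A(\nu)\le p(T)-1$. For a path, $A$ is an irreducible tridiagonal Hermitian matrix, so every eigenvalue is simple and $\mult_A(\nu)=1=p-1$. For a general tree, I would use the Parter--Wiener type reduction. If $\nu$ has multiplicity at least $2$, there is a vertex $v$ with $\mult_{A(v)}(\nu)=\mult_A(\nu)+1$ that is $\nu$ for at least three branches. One then counts via the branches $T_1,\dots,T_k$ of $T-v$. Alternatively, I would invoke the identity that the maximum multiplicity equals the path cover number, which is at most $p(T)-1$. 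Either way the tree bound is strictly below $2c+p=p$, which gives the strict inequality needed for the equality clause when $G$ is a tree.

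\emph{Cycles.} If $G=C_n$, then $c=1$ and $p=0$. Deleting any vertex leaves a path, whose matrices have only simple eigenvalues, so (a) gives $\mult_A(\nu)\le 2=2c+p$. Equality occurs exactly when $\mult_A(\nu)=2$, and such $A$ exist; for instance, the adjacency matrix of $C_n$ has eigenvalues of multiplicity $2$.

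\emph{Inductive step ($c\ge1$, $G$ not a cycle).} The goal is the strict bound $\mult_A(\nu)\le 2c+p-1$. I would split into two cases.
\begin{enumerate}
\item[(i)] Some cycle edge $uv$ has $\deg u,\deg v\ge3$. Deleting it keeps $G$ connected, lowers $c$ by $1$, and creates no new pendant vertices. Then (b) and induction give $\mult_A(\nu)\le 2(c-1)+p+2=2c+p$. Equality would force equality for $G-uv$, so $G-uv$ would be a cycle. That is impossible, since $G-uv$ has the vertices $u,v$ of degree at least $2$ together with a pendant path or extra degree. Hence the inequality is strict.
\item[(ii)] Every cycle edge has an endpoint of degree $2$. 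Then $G$ contains a cycle or an ear passing through a vertex $w$ of degree at least $3$ whose neighbouring cycle vertex has degree $2$. I would delete $w$ itself and use (a). The components of $G-w$ have total cyclomatic number at most $c-1$, since $w$ lay on a cycle. The count of pendant vertices must be controlled: degree-$2$ neighbours of $w$ become pendant. These are paid for because the ear through $w$ contributes $1$ to $c$, worth $2$ in the bound. Applying induction componentwise, with multiplicities adding over components, should give the result with room to spare.
\end{enumerate}

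\emph{Main obstacle.} The hard part is case (ii): bounding the new pendant vertices created by deleting $w$ against the drop in $2c$. One must also handle components that are single vertices, where the bound $2c+p=0$ fails and multiplicity $1$ must be absorbed separately. I would first try to choose $w$ as an endpoint of a maximal ear (a path of degree-$2$ vertices). Deleting such a $w$ makes exactly two new pendant vertices while removing at least one independent cycle. The excess then absorbs both the $+1$ from (a) and any isolated-vertex components. Finally, I would check in each case that equality propagates only to cycles, which completes the characterization.
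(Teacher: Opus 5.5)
The paper gives no proof of this theorem; it quotes it from Chen--Guo--Wang, so your argument has to stand on its own. Your tools are fine: interlacing, the rank-two edge perturbation, the tree bound via path covers, and the cycle case. The inductive step, however, has a genuine gap exactly where the strict inequality is needed.

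\emph{Case (ii).} Deleting the branch vertex $w$ does not give $2c+p-1$. Take $G$ to be $C_n$ with one pendant edge $ww'$. Then $c=1$ and $p=1$, every cycle edge has an endpoint of degree $2$, and $w$ is the only vertex your rule allows. Now $G-w=P_{n-1}\cup K_1$. Choosing $a_{w'w'}$ equal to an eigenvalue $\nu$ of the path block gives $\mult_{A(w)}(\nu)=2$. Interlacing therefore yields only $\mult_A(\nu)\le 3=2c+p$, whereas the theorem asserts $\mult_A(\nu)\le 2$.

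Your count of new pendant vertices is also off in general. Every degree-$2$ neighbour of $w$ becomes pendant, so a longer pendant path at $w$ gives three, and a bouquet of cycles at $w$ gives $\deg w$.

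This failure does not stay confined to the equality clause. Your induction carries the strict bound as part of the hypothesis: case (i) uses it for $G-uv$, and the componentwise count in case (ii) needs it for every non-cycle component. So losing it here propagates through the induction.

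The fix is to delete a degree-$2$ cycle vertex $x$ adjacent to $w$ instead of $w$ itself. Then $G-x$ is connected with $c(G-x)=c-1$, and $p(G-x)\le p+1$. Equality holds only when the other neighbour of $x$ becomes pendant; in that case $G-x$ is not a cycle and the strict inductive bound applies. Either way $\mult_A(\nu)\le 1+\mult_{A(x)}(\nu)\le 2c+p-1$.

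\emph{Case (i).} This case also contains a false step: $G-uv$ can be a cycle, namely when $G$ is a cycle with a chord $uv$. This graph falls under case (i), since the chord is the only edge joining two vertices of degree $3$. Moreover $G-uv=C_n$ admits multiplicity $2$, so your argument gives only $\mult_A(\nu)\le 4$ rather than $3$. This configuration must be handled separately; for instance, $G-u$ is a path, so $\mult_A(\nu)\le 2$.

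With these two repairs the induction closes, provided you also justify that the path cover number of a tree is at most $p(T)-1$ in the base case. One way is to strip the path from a leaf up to the first vertex of degree at least $3$ and induct.
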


The last assertion follows because multiplicity is an integer.  The cited theorem is stated for Hermitian matrices, so it applies in particular to all real symmetric matrices arising below.

We next derive a quasi-pendant refinement for a general matrix in $\mathcal S(G)$.  This is the only graph-matrix statement beyond Theorem~\ref{thm:CGW} that we need, and we prove it completely.

\begin{lemma}\label{lem:pendant-elim}
Let $G$ be a connected graph on at least three vertices, let $A\in\mathcal S(G)$, let $\nu\in\R$, and let $P$ be the set of pendant vertices of $G$.  Assume
\[
a_{vv}\ne\nu\qquad(v\in P).
\]
Set $M=A-\nu I$ and $G_0=G-P$.  Then:
\begin{enumerate}
\item[(i)] $G_0$ is connected and nonempty;
\item[(ii)] there is a Hermitian matrix $M_0$ indexed by $V(G_0)$ such that
\[
\nullity(M)=\nullity(M_0);
\]
\item[(iii)] if $|V(G_0)|\ge2$, then the off-diagonal graph of $M_0$ is exactly $G_0$;
\item[(iv)]
\[
c(G_0)=c(G),\qquad p(G_0)\le q(G).
\]
\end{enumerate}
\end{lemma}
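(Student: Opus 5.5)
The approach is a Schur complement on the pendant block.

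For (i): since $G$ is connected on at least three vertices, no edge joins two pendant vertices. Indeed, such an edge would be a whole component $K_2$. Hence every $v\in P$ has a unique neighbor $u(v)\notin P$, so $G_0\neq\varnothing$. Removing leaves does not disconnect the remaining vertices, because any path in $G$ between two vertices of $G_0$ never passes through a degree-one vertex as an interior vertex. So $G_0$ is connected.

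For (ii), order the vertices as $(P,V(G_0))$ and write
\[
M=\begin{pmatrix} D & B\\ B^* & C\end{pmatrix}.
\]
Here $D=\operatorname{diag}(a_{vv}-\nu)_{v\in P}$, which is diagonal because $P$ is independent, and invertible by hypothesis. The matrix $B$ has exactly one nonzero entry in each row, namely $a_{v,u(v)}$, and $C=A[V(G_0)]-\nu I$. Set $M_0=C-B^*D^{-1}B$. This matrix is Hermitian, and the standard congruence
\[
\begin{pmatrix} I&0\\ -B^*D^{-1}&I\end{pmatrix}M\begin{pmatrix} I&-D^{-1}B\\0&I\end{pmatrix}=D\oplus M_0
\]
gives $\nullity(M)=\nullity(D)+\nullity(M_0)=\nullity(M_0)$. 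The key point for (iii) is computing
\[
(B^*D^{-1}B)_{uw}=\sum_{v\in P}\overline{b_{vu}}\,b_{vw}/(a_{vv}-\nu).
\]
Each $v$ contributes only when $u=w=u(v)$, so $B^*D^{-1}B$ is diagonal. Therefore $M_0$ has the same off-diagonal entries as $A[V(G_0)]$, and its off-diagonal graph is exactly the induced subgraph $G_0$. This diagonality, which rests on the uniqueness of each pendant's neighbor, is the crucial step.

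For (iv): $G_0$ loses $|P|$ vertices and $|P|$ edges, so $c(G_0)=c(G)$, using that $G_0$ is connected. For the pendant count, first take $|V(G_0)|\ge2$ and let $w$ be pendant in $G_0$. Then $w$ was not pendant in $G$ and $\deg_G(w)\ge2$, so $w$ has a neighbor in $P$, i.e.\ $w$ is quasi-pendant in $G$. This gives $p(G_0)\le q(G)$. The remaining case $|V(G_0)|=1$ needs a convention check: then $p(G_0)=0$ under the usual convention that an isolated vertex is not pendant, while $q(G)=1$, so the inequality holds. I expect only this degenerate-case bookkeeping and the diagonality computation to need care; everything else is routine.
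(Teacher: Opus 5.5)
Your proposal is correct and follows the paper's proof essentially step for step. You take the Schur complement on the invertible diagonal pendant block, use the uniqueness of each pendant's neighbour to make the correction term diagonal, and use the same vertex/edge count and degree argument for $c(G_0)=c(G)$ and $p(G_0)\le q(G)$. Your separate handling of $|V(G_0)|=1$ is harmless but not needed, since the degree argument is vacuous there.
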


\begin{proof}
Because $G$ has at least three vertices, two pendant vertices cannot be adjacent: if two degree-one vertices were adjacent, connectedness would force $G=K_2$.  Hence $P$ is an independent set.  Each $v\in P$ has exactly one neighbour, say $s(v)$.

Deleting the pendant vertices from a connected graph on at least three vertices leaves a nonempty connected graph.  This proves (i).

We next prove (ii).
Order the vertices with $P$ first.  Since $P$ is independent,
\[
M=
\begin{pmatrix}
D&C\\
C^*&N
\end{pmatrix},
\]
where
\[
D=\operatorname{diag}(a_{vv}-\nu:v\in P)
\]
is invertible by hypothesis.  The Schur complement
\[
M_0=N-C^*D^{-1}C
\]
satisfies
\[
\nullity(M)=\nullity(M_0),
\]
because multiplication by invertible block-triangular matrices gives
\[
\begin{pmatrix}I&0\\-C^*D^{-1}&I\end{pmatrix}
M
\begin{pmatrix}I&-D^{-1}C\\0&I\end{pmatrix}
=
\begin{pmatrix}D&0\\0&M_0\end{pmatrix}.
\]
Thus $M_0$ is Hermitian, is indexed by $V(G_0)$, and has the same nullity as $M$.  This proves (ii).

We now prove (iii).
Each row of $C$ has exactly one nonzero entry, namely in the column $s(v)$.  Therefore $C^*D^{-1}C$ is diagonal.  Thus the Schur complement changes only diagonal entries of $N$ and leaves every off-diagonal entry unchanged.  Hence, whenever $G_0$ has at least two vertices, the off-diagonal graph of $M_0$ is exactly the induced graph $G_0$.
This proves (iii).

Finally, we prove (iv).
Since each deleted pendant vertex removes exactly one vertex and exactly one edge,
\[
|E(G_0)|=|E(G)|-|P|,
\qquad
|V(G_0)|=|V(G)|-|P|.
\]
Both graphs are connected, so
\[
c(G_0)=|E(G_0)|-|V(G_0)|+1=c(G).
\]
Finally, let $u$ be pendant in $G_0$.  If $u$ had no neighbour in $P$, then its degree in $G$ would also be one, so $u$ itself would lie in $P$, a contradiction.  Thus every pendant vertex of $G_0$ is adjacent in $G$ to at least one pendant vertex of $G$, hence is quasi-pendant in $G$.  Therefore $p(G_0)\le q(G)$.
This proves (iv).
\end{proof}

\begin{theorem}\label{thm:Hermitian-q}
Let $G$ be connected on at least three vertices, let $A\in\mathcal S(G)$, and let $\nu\in\R$.  If
\[
a_{vv}\ne\nu
\]
for every pendant vertex $v$, then
\[
\mult_A(\nu)\le 2c(G)+q(G).
\]
If $G$ is neither a star nor a cycle, then
\[
\mult_A(\nu)\le 2c(G)+q(G)-1.
\]
\end{theorem}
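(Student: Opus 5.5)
We reduce to Theorem~\ref{thm:CGW} by means of Lemma~\ref{lem:pendant-elim}. Set $M=A-\nu I$, so that $\mult_A(\nu)=\nullity(M)$ because $A$ is Hermitian. Let $P$ be the set of pendant vertices and $G_0=G-P$. By Lemma~\ref{lem:pendant-elim}, $G_0$ is connected and nonempty, and there is a Hermitian $M_0$ indexed by $V(G_0)$ with $\nullity(M)=\nullity(M_0)$. Moreover $c(G_0)=c(G)$ and $p(G_0)\le q(G)$. Write $M_0+\nu I=:A_0$, so that $\nullity(M_0)=\mult_{A_0}(\nu)$.

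\emph{Case $|V(G_0)|=1$.} Then $M_0$ is $1\times1$, so $\nullity(M)\le1$. In this case $G$ is a star, since every vertex other than the unique survivor is pendant and adjacent to it. Hence $c(G)=0$ and $q(G)=1$, and the bound $\mult_A(\nu)\le 2c(G)+q(G)$ holds. The second assertion excludes stars, so nothing more is needed here.

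\emph{Case $|V(G_0)|\ge2$.} By Lemma~\ref{lem:pendant-elim}(iii) we have $A_0\in\mathcal S(G_0)$ with $G_0$ connected. Theorem~\ref{thm:CGW} then gives
\[
\mult_A(\nu)=\mult_{A_0}(\nu)\le 2c(G_0)+p(G_0)\le 2c(G)+q(G).
\]
For the strict version, suppose $G$ is neither a star nor a cycle.
\begin{itemize}
\item If $G_0$ is not a cycle, Theorem~\ref{thm:CGW} gives the bound minus one directly.
\item If $G_0$ is a cycle, then $p(G_0)=0$, and Theorem~\ref{thm:CGW} gives $\mult\le 2c(G)=2c(G)+q(G)-1+(1-q(G))$. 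So it suffices that $q(G)\ge1$, which holds because $G\ne G_0$: otherwise $G$ itself would be a cycle. Hence $P\neq\varnothing$ and $q(G)\ge1$, giving $\mult_A(\nu)\le 2c(G)+q(G)-1$.
\end{itemize}

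\emph{Main obstacle.} The delicate point is the strict inequality when $p(G_0)<q(G)$ but $G_0$ is a cycle or $K_2$. When $G_0$ is a cycle, the slack is supplied by $q(G)\ge1$, as above. When $G_0=K_2$ (a double star), $G_0$ is not a cycle, so Theorem~\ref{thm:CGW} still yields the strict bound. We also check that the case $|V(G_0)|=1$ exactly coincides with $G$ being a star, so that no non-star graph reaches this case without a strict bound.
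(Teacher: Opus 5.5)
Your proposal is correct and follows essentially the same route as the paper: you eliminate the pendant vertices with the Schur complement of Lemma~\ref{lem:pendant-elim}, identify $|V(G_0)|=1$ as the star case, apply Theorem~\ref{thm:CGW} to $M_0$, and use $q(G)\ge1$ as the slack when $G_0$ is a cycle. The only differences are cosmetic. You shift $M_0$ by $\nu I$, whereas the paper applies Theorem~\ref{thm:CGW} at the eigenvalue $0$. You also absorb the case $P=\varnothing$ into the case $|V(G_0)|\ge2$ with $G_0=G$, which the paper treats separately.
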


\begin{proof}
If $\nu$ is not an eigenvalue there is nothing to prove, so assume it is.

Let $G_0=G-P$, and let $M_0$ be the Hermitian matrix supplied by Lemma~\ref{lem:pendant-elim}(ii).  If $G_0$ consists of one vertex, then connectedness of $G$ implies that $G$ is a star.  In this case $q(G)=1$ and
\[
\mult_A(\nu)=\nullity(M_0)\le1=q(G),
\]
so the first assertion holds.

Suppose now that $|V(G_0)|\ge2$.  By Lemma~\ref{lem:pendant-elim}, $M_0$ is a Hermitian matrix whose graph is the connected graph $G_0$.  Applying Theorem~\ref{thm:CGW} to the eigenvalue $0$ of $M_0$ gives
\[
\mult_A(\nu)=\nullity(M_0)\le 2c(G_0)+p(G_0)\le 2c(G)+q(G).
\]
This proves the first bound.

Assume that $G$ is neither a star nor a cycle.  If $P=\varnothing$, then $q(G)=0$ and Theorem~\ref{thm:CGW}, applied directly to $A$, gives
\[
\mult_A(\nu)\le2c(G)-1.
\]
Now let $P\ne\varnothing$.  The case $|V(G_0)|=1$ would make $G$ a star, so $|V(G_0)|\ge2$.  If $G_0$ is not a cycle, then Theorem~\ref{thm:CGW} and Lemma~\ref{lem:pendant-elim} give
\[
\mult_A(\nu)
\le2c(G_0)+p(G_0)-1
\le2c(G)+q(G)-1.
\]
If $G_0$ is a cycle, then $c(G)=c(G_0)=1$ and Theorem~\ref{thm:CGW} gives $\nullity(M_0)\le2$.  Since $P\ne\varnothing$, at least one vertex of $G_0$ is adjacent in $G$ to a deleted pendant vertex, so $q(G)\ge1$.  Hence
\[
\mult_A(\nu)\le2\le2c(G)+q(G)-1.
\]
This proves the refinement.
\end{proof}

\section{Symmetric linearization of the boundary operator}

Fix a pure ridge-connected $d$-complex $K$ with at least two facets, and let
\[
B_d=\partial_d
\]
be its boundary matrix from oriented $d$-faces to oriented ridges.  Let $\lambda>0$ and put
\[
\mu=\sqrt\lambda.
\]
Define the real symmetric matrix
\[
\mathcal A_K=
\begin{pmatrix}
0&B_d\\
B_d^{\mathsf T}&0
\end{pmatrix},
\]
where the first block of coordinates is indexed by ridges and the second by facets.

\begin{lemma}\label{lem:linearization}
For every $\lambda>0$,
\[
m_K(\lambda)=\mult_{\mathcal A_K}(\sqrt\lambda)
=\nullity(\mathcal A_K-\sqrt\lambda I).
\]
\end{lemma}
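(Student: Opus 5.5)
The plan is to prove the two equalities separately. The second equality, $\mult_{\mathcal A_K}(\mu)=\nullity(\mathcal A_K-\mu I)$, is immediate. Since $\mathcal A_K$ is real symmetric, it is diagonalizable, so algebraic and geometric multiplicities coincide. The real content is therefore the first equality, which I would prove by an explicit linear isomorphism between the $\lambda$-eigenspace of $L_{d-1}^{\mathrm{up}}(K)=B_dB_d^{\mathsf T}$ and the $\mu$-eigenspace of $\mathcal A_K$, where $\mu=\sqrt\lambda>0$.

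First I would write an eigenvector of $\mathcal A_K$ in block form $(x,y)$, with $x$ indexed by ridges and $y$ by facets. The equation $\mathcal A_K(x,y)^{\mathsf T}=\mu(x,y)^{\mathsf T}$ is equivalent to the pair
\[
B_dy=\mu x,\qquad B_d^{\mathsf T}x=\mu y.
\]
Eliminating $y=\mu^{-1}B_d^{\mathsf T}x$ gives $B_dB_d^{\mathsf T}x=\mu^2x=\lambda x$. So the projection $\pi:(x,y)\mapsto x$ maps $\ker(\mathcal A_K-\mu I)$ into $\ker(L_{d-1}^{\mathrm{up}}(K)-\lambda I)$. This map is injective: if $x=0$, then $y=\mu^{-1}B_d^{\mathsf T}x=0$. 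This step uses $\mu\ne0$, which is exactly where the hypothesis $\lambda>0$ enters.

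Conversely, given $x$ with $B_dB_d^{\mathsf T}x=\lambda x$, I would set $y=\mu^{-1}B_d^{\mathsf T}x$. Then $B_d^{\mathsf T}x=\mu y$ holds by definition, and
\[
B_dy=\mu^{-1}B_dB_d^{\mathsf T}x=\mu^{-1}\lambda x=\mu x.
\]
Hence $(x,y)$ lies in $\ker(\mathcal A_K-\mu I)$, and $\pi$ is surjective. So $\pi$ is a linear isomorphism, and the two nullities agree. Since $L_{d-1}^{\mathrm{up}}(K)$ is symmetric, its nullity at $\lambda$ equals the multiplicity $m_K(\lambda)$, which completes the argument.

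There is no real obstacle here. The only points needing care are keeping the block ordering (ridges first, then facets) consistent with the definition of $\mathcal A_K$, and noting where $\mu\neq 0$ is used. At $\lambda=0$ the correspondence fails, because $\ker\mathcal A_K$ also picks up $\ker B_d$ on the facet side; this is why the lemma is restricted to positive eigenvalues.
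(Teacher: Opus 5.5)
Your proof is correct, but it works at the level of eigenspaces, whereas the paper works at the level of spectra.

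The paper's proof takes the singular values $\sigma_1,\ldots,\sigma_r$ of $B_d$. It observes that the positive eigenvalues of $B_dB_d^{\mathsf T}$ are the $\sigma_i^2$, and that the nonzero eigenvalues of $\mathcal A_K$ are $\pm\sigma_i$ with the same multiplicities. Multiplicities therefore match after taking square roots. This is a one-line appeal to a standard fact about the symmetric dilation of $B_d$. The paper does not prove that fact, and it yields more than the lemma needs, namely the entire nonzero spectrum of $\mathcal A_K$, symmetric about $0$.

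Your map $x\mapsto(x,\mu^{-1}B_d^{\mathsf T}x)$ is an explicit isomorphism from $\ker(B_dB_d^{\mathsf T}-\lambda I)$ onto $\ker(\mathcal A_K-\mu I)$. In effect it proves that standard fact at the single eigenvalue $\mu$. What your route buys:
\begin{itemize}
\item It is fully self-contained.
\item It works verbatim when $\lambda$ is not an eigenvalue, since both kernels are then zero.
\item It lands directly on the nullity formulation used in the Schur-complement step of Lemma~\ref{lem:free-elim}.
\item It shows exactly where $\mu\neq0$ is needed.
\end{itemize}
Your justification of the second equality (real symmetric, hence diagonalizable, so algebraic and geometric multiplicities agree) is also a point the paper leaves implicit.
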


\begin{proof}
Let the nonzero singular values of $B_d$ be $\sigma_1,\ldots,\sigma_r$, repeated according to multiplicity.  The positive eigenvalues of $B_dB_d^{\mathsf T}$ are $\sigma_1^2,\ldots,\sigma_r^2$, while the nonzero eigenvalues of the symmetric block matrix $\mathcal A_K$ are
\[
\sigma_1,-\sigma_1,\ldots,\sigma_r,-\sigma_r
\]
with the same multiplicities.  Thus the multiplicity of $\lambda=\mu^2>0$ in $B_dB_d^{\mathsf T}$ equals the multiplicity of $\mu$ in $\mathcal A_K$.
\end{proof}

We now separate the free-ridge coordinates in order to eliminate them.  Let $\mathcal F$ be the set of free ridges and let
\[
\mathcal N=K_{d-1}\setminus\mathcal F
\]
be the set of nonfree ridges.  Put $\ell=|\mathcal F|$, and write the boundary matrix by row blocks as
\[
B_d=
\begin{pmatrix}
B_{\mathcal F}\\
B_{\mathcal N}
\end{pmatrix},
\]
where $B_{\mathcal F}$ contains the rows indexed by free ridges and $B_{\mathcal N}$ contains the rows indexed by nonfree ridges.

Order the coordinates of $\mathcal A_K-\mu I$ as
\[
\mathcal F,\qquad \mathcal N,\qquad K_d.
\]
With respect to these three coordinate groups, the matrix is
\[
\mathcal A_K-\mu I=
\begin{pmatrix}
-\mu I_\ell&0&B_{\mathcal F}\\
0&-\mu I_{|\mathcal N|}&B_{\mathcal N}\\
B_{\mathcal F}^{\mathsf T}&B_{\mathcal N}^{\mathsf T}&-\mu I_{f_d(K)}
\end{pmatrix}.
\]
To take the Schur complement of the free-ridge block, combine the last two coordinate groups, $\mathcal N$ and $K_d$, into one block.  Then
\[
\mathcal A_K-\mu I=
\begin{pmatrix}
-\mu I_\ell&C\\
C^{\mathsf T}&D
\end{pmatrix},
\]
where
\[
C=\begin{pmatrix}0&B_{\mathcal F}\end{pmatrix},
\qquad
D=
\begin{pmatrix}
-\mu I_{|\mathcal N|}&B_{\mathcal N}\\
B_{\mathcal N}^{\mathsf T}&-\mu I_{f_d(K)}
\end{pmatrix}.
\]

\begin{lemma}\label{lem:free-elim}
Let
\[
S_\mu=D+\frac1\mu C^{\mathsf T}C.
\]
Then
\[
\nullity(\mathcal A_K-\mu I)=\nullity(S_\mu).
\]
Moreover, $C^{\mathsf T}C$ is diagonal.  Consequently, the off-diagonal graph of $S_\mu$ has vertices indexed by all facets and all nonfree ridges, with a facet $F$ adjacent to a nonfree ridge $\tau$ exactly when $\tau\subset F$.
\end{lemma}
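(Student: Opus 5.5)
The plan is a single Schur-complement elimination of the free-ridge block, mirroring the argument in the proof of Lemma~\ref{lem:pendant-elim}(ii). Since $\mu=\sqrt\lambda>0$, the block $-\mu I_\ell$ is invertible with inverse $-\frac1\mu I_\ell$. Multiply $\mathcal A_K-\mu I$ on the left by
\[
\begin{pmatrix}I&0\\ \frac1\mu C^{\mathsf T}&I\end{pmatrix}
\]
and on the right by its transpose. Both factors are invertible, and the product is block diagonal:
\[
\begin{pmatrix}-\mu I_\ell&0\\0&D-C^{\mathsf T}(-\mu I_\ell)^{-1}C\end{pmatrix}
=\begin{pmatrix}-\mu I_\ell&0\\0&S_\mu\end{pmatrix}.
\]
Nullity is preserved under multiplication by invertible matrices, and $-\mu I_\ell$ contributes no kernel. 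Hence $\nullity(\mathcal A_K-\mu I)=\nullity(S_\mu)$.

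For the diagonality claim, first note that $C=(0\;\;B_{\mathcal F})$. Therefore
\[
C^{\mathsf T}C=\begin{pmatrix}0&0\\0&B_{\mathcal F}^{\mathsf T}B_{\mathcal F}\end{pmatrix},
\]
and it suffices to show that $B_{\mathcal F}^{\mathsf T}B_{\mathcal F}$ is diagonal. Its $(F,F')$ entry is $\sum_{\tau\in\mathcal F}(B_d)_{\tau F}(B_d)_{\tau F'}$. A free ridge lies in exactly one facet, so every row of $B_{\mathcal F}$ has exactly one nonzero entry, equal to $\pm1$. The product $(B_d)_{\tau F}(B_d)_{\tau F'}$ can be nonzero only when $F=F'$. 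So the matrix is diagonal, and its $(F,F)$ entry is the number of free ridges of $F$.

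Consequently $S_\mu$ has the same off-diagonal entries as $D$. The off-diagonal part of $D$ consists only of the blocks $B_{\mathcal N}$ and $B_{\mathcal N}^{\mathsf T}$, because the diagonal blocks $-\mu I$ have zero off-diagonal entries. The entry $(B_{\mathcal N})_{\tau F}$ is nonzero, equal to $\pm1$, exactly when $\tau\subset F$. Thus in the off-diagonal graph of $S_\mu$ no two facets are adjacent, no two nonfree ridges are adjacent, and a facet $F$ is adjacent to a nonfree ridge $\tau$ exactly when $\tau\subset F$.

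There is no real obstacle here. The only point needing care is that the free-ridge rows each have a single nonzero entry, which is precisely what makes the Schur correction diagonal.
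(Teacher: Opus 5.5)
Your proposal is correct and follows essentially the same route as the paper: a Schur complement on the invertible free-ridge block $-\mu I_\ell$, using the same pair of block-triangular multipliers, followed by the observation that each free-ridge row of $C$ has a single entry $\pm1$. That observation makes $C^{\mathsf T}C$ diagonal, so the off-diagonal entries of $S_\mu$ are exactly the facet--nonfree-ridge incidences already present in $D$.
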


\begin{proof}
Since $\mu>0$, the block $-\mu I_\ell$ is invertible.  Its Schur complement is
\[
D-C^{\mathsf T}(-\mu I_\ell)^{-1}C
=D+\frac1\mu C^{\mathsf T}C=S_\mu,
\]
Indeed, multiplication by the following invertible block-triangular matrices eliminates the off-diagonal blocks:
\[
\begin{pmatrix}
I&0\\
-C^{\mathsf T}(-\mu I_\ell)^{-1}&I
\end{pmatrix}
(\mathcal A_K-\mu I)
\begin{pmatrix}
I&-(-\mu I_\ell)^{-1}C\\
0&I
\end{pmatrix}
=
\begin{pmatrix}
-\mu I_\ell&0\\
0&S_\mu
\end{pmatrix}.
\]
The two multiplying matrices are invertible, so they do not change nullity.  Since $\mu \ne 0$, the block-diagonal matrix on the right has nullity $\nullity(S_\mu)$.  Hence
\[
\nullity(\mathcal A_K-\mu I)=\nullity(S_\mu).
\]

A free ridge belongs to exactly one facet.  Therefore every row of $C$ has exactly one nonzero entry, equal to $1$ or $-1$.  If $i\ne j$ are two columns of $C$, then no row has nonzero entries in both columns.  Hence
\[
(C^{\mathsf T}C)_{ij}
=\sum_{\tau\in\mathcal F}C_{\tau i}C_{\tau j}=0
\qquad(i\ne j).
\]
Thus every off-diagonal entry of $C^{\mathsf T}C$ is zero, so $C^{\mathsf T}C$ is diagonal.  The off-diagonal entries of $S_\mu$ are therefore exactly those already present in $D$, namely the nonzero incidence entries $\pm1$ between nonfree ridges and facets.
\end{proof}

Denote the off-diagonal graph of $S_\mu$ by $H_K$.  The graph is independent of $\lambda$ and of all orientation choices.

\begin{lemma}\label{lem:HK-diag}
The graph $H_K$ is connected.  If $r_F$ is the number of free ridges contained in a facet $F$, then
\[
(S_\mu)_{\tau\tau}=-\mu
\]
for every nonfree ridge $\tau$, and
\[
(S_\mu)_{FF}=-\mu+\frac{r_F}{\mu}
=\frac{r_F-\lambda}{\sqrt\lambda}
\]
for every facet $F$.  In particular, if $F$ is pendant then
\[
(S_\mu)_{FF}=\frac{d-\lambda}{\sqrt\lambda}.
\]
\end{lemma}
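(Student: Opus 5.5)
The statement has two parts: connectivity of $H_K$, and a computation of the diagonal of $S_\mu$. Both follow directly from the block description in Lemma~\ref{lem:free-elim}, so the plan is to handle them separately.

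\textbf{Connectivity.} Recall that $H_K$ has vertex set $K_d\cup\mathcal N$, with $F\sim\tau$ exactly when $\tau\subset F$. I will treat facets first and then nonfree ridges.
\begin{itemize}
\item \emph{Facets.} Given two facets $F,F'$, ridge-connectedness supplies a chain $F=F_0,F_1,\ldots,F_t=F'$ in which $F_{i-1}$ and $F_i$ share a ridge $\tau_i$. Because $F_{i-1}\neq F_i$ are two distinct facets containing $\tau_i$, the ridge $\tau_i$ is nonfree, so $\tau_i\in\mathcal N$. The path $F_{i-1},\tau_i,F_i$ therefore lies in $H_K$. Concatenating these paths joins $F$ to $F'$.
\item \emph{Nonfree ridges.} Every nonfree ridge $\tau$ lies in at least two facets. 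So $\tau$ is adjacent in $H_K$ to some facet, and is connected to the facet component.
\end{itemize}
Hence $H_K$ is connected. The vertex set is nonempty because there are at least two facets.

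\textbf{Diagonal entries.} I read these off from $S_\mu=D+\mu^{-1}C^{\mathsf T}C$.
\begin{itemize}
\item The diagonal of $D$ equals $-\mu$ on every coordinate, ridges and facets alike.
\item The columns of $C=\begin{pmatrix}0&B_{\mathcal F}\end{pmatrix}$ indexed by nonfree ridges are zero. So for $\tau\in\mathcal N$ we get $(C^{\mathsf T}C)_{\tau\tau}=0$, and hence $(S_\mu)_{\tau\tau}=-\mu$.
\item For a facet $F$, the column of $C$ at $F$ is the column of $B_{\mathcal F}$ at $F$. Its entries are $\pm1$ exactly at the free ridges contained in $F$, and $0$ elsewhere. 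Therefore $(C^{\mathsf T}C)_{FF}=r_F$.
\end{itemize}
This gives, using $\mu^2=\lambda$,
\[
(S_\mu)_{FF}=-\mu+\frac{r_F}{\mu}=\frac{r_F-\lambda}{\sqrt\lambda}.
\]
A pendant facet has $r_F=d$ by definition, which yields the final formula $(S_\mu)_{FF}=(d-\lambda)/\sqrt\lambda$.

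I see no real obstacle in either part. The only point needing care is the observation that a ridge shared by two distinct facets is nonfree. This is what guarantees that ridge-connectivity of $K$ is witnessed inside $H_K$, even though the free ridges have been eliminated.
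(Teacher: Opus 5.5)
Your proposal is correct and follows the paper's proof essentially step for step. For connectivity, you join facets through the necessarily nonfree shared ridges and attach each nonfree ridge to a facet containing it; for the diagonal, you read the entries off $S_\mu=D+\mu^{-1}C^{\mathsf T}C$, using $(C^{\mathsf T}C)_{\tau\tau}=0$ for nonfree $\tau$ and $(C^{\mathsf T}C)_{FF}=r_F$.
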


\begin{proof}
If two distinct facets $F$ and $F'$ share a ridge $\tau$, then $\tau$ is contained in both $F$ and $F'$, so $\deg_K(\tau)\ge2$ and $\tau$ is nonfree.  Lemma~\ref{lem:free-elim} therefore shows that $\tau$ is a vertex of $H_K$ adjacent to both $F$ and $F'$, giving the path $F-\tau-F'$ of length two.  Ridge-connectedness connects all facet vertices in this way.  Every nonfree ridge belongs to at least two facets, so every nonfree-ridge vertex is attached to this connected set.  Hence $H_K$ is connected.

We next compute the diagonal entries of $S_\mu=D+\mu^{-1}C^{\mathsf T}C$.  The diagonal entries of $D$ are all equal to $-\mu$.  If $\tau$ is a nonfree ridge, then the column of $C$ indexed by $\tau$ is zero, since $C=(0,B_{\mathcal F})$.  Hence $(C^{\mathsf T}C)_{\tau\tau}=0$, and therefore
\[
(S_\mu)_{\tau\tau}=-\mu.
\]
Now let $F$ be a facet.  For a free ridge $\tau$, the entry $C_{\tau F}$ is $1$ or $-1$ if $\tau\subset F$, and it is $0$ otherwise.  Thus the nonzero entries in the $F$-column of $C$ correspond exactly to the free ridges contained in $F$.  Consequently,
\[
(C^{\mathsf T}C)_{FF}
=\sum_{\tau\in\mathcal F}C_{\tau F}^2=r_F.
\]
It follows that
\[
(S_\mu)_{FF}=-\mu+\frac{r_F}{\mu}
=\frac{r_F-\lambda}{\sqrt\lambda},
\]
where $\mu=\sqrt\lambda$.  Finally, a pendant facet has exactly $d$ free ridges, so $r_F=d$, giving the last formula.
\end{proof}

\begin{lemma}\label{lem:simplicial-cycle}
The reduced incidence graph $H_K$ is a cycle if and only if $K$ is a simplicial cycle.
\end{lemma}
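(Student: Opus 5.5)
The plan is to prove the two implications separately, working directly from the description of $H_K$ in Lemma~\ref{lem:free-elim}. Recall that $H_K$ is bipartite. One side consists of the facets of $K$ and the other of the nonfree ridges. A facet $F$ is joined to a nonfree ridge $\tau$ exactly when $\tau\subset F$. So the degree of a ridge vertex $\tau$ in $H_K$ is $\deg_K(\tau)\ge2$, and the degree of a facet vertex $F$ is the number of nonfree ridges of $F$, namely $d+1-r_F$.

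\emph{Simplicial cycle $\Rightarrow$ cycle.} Suppose $K$ is a simplicial cycle with facets $F_1,\dots,F_m$ and ridges $R_1,\dots,R_m$.
\begin{enumerate}
\item[(a)] By (iii), the ridge vertices of $H_K$ are exactly $R_1,\dots,R_m$.
\item[(b)] By (ii), each $R_i$ is adjacent precisely to $F_i$ and $F_{i+1}$.
\item[(c)] Each $F_i$ is adjacent exactly to $R_{i-1}$ and $R_i$, since these are its only nonfree ridges.
\end{enumerate}
Hence $H_K$ is the closed walk $F_1,R_1,F_2,R_2,\dots,F_m,R_m,F_1$. This walk has $2m\ge6$ distinct vertices, because the $F_i$ are distinct facets and the $R_i$ are distinct ridges. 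So $H_K$ is a cycle of length $2m$.

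\emph{Cycle $\Rightarrow$ simplicial cycle.} Suppose $H_K$ is a cycle.
\begin{enumerate}
\item[(a)] Every vertex has degree $2$. Since $H_K$ is bipartite with ridges and facets alternating, it has the form $F_1,R_1,F_2,\dots,F_m,R_m$ with $m$ facets and $m$ ridges.
\item[(b)] Degree $2$ at a ridge vertex gives $\deg_K(R_i)=2$, and its two neighbours are $F_i$ and $F_{i+1}$. This gives (ii). It also gives $R_i\subseteq F_i\cap F_{i+1}$.
\item[(c)] Two distinct $d$-simplices share at most one ridge, since the intersection has at most $d$ vertices. Therefore $R_i=F_i\cap F_{i+1}$, which is (i).
\item[(d)] The ridge vertices of $H_K$ are all nonfree ridges, which gives (iii).
\item[(e)] Since the facets of $K$ are the facet vertices of $H_K$, we get $K_d=\{F_1,\dots,F_m\}$. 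Ridge-connectedness is given.
\end{enumerate}

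The remaining requirement is $m\ge3$, and this is the only step needing care. A cycle in a simple bipartite graph has length at least $4$, so $m\ge2$. If $m=2$, then $F_1$ and $F_2$ would share two distinct ridges $R_1$ and $R_2$. That contradicts the intersection bound in (c), so $m\ge3$.

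A note on conventions: I am treating $H_K$ as a simple graph with at least three vertices, as the definition of a cycle requires. This is automatic here, because $K$ has at least two facets and is ridge-connected, so at least one nonfree ridge exists.
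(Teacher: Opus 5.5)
Your proof is correct and follows essentially the same route as the paper. Both arguments use the bipartite facet/nonfree-ridge structure of $H_K$ to force alternation and degree two, and both rule out $m=2$ because distinct $d$-simplices share at most one ridge; your explicit derivation of $R_i=F_i\cap F_{i+1}$ from that same bound fills in a step the paper leaves implicit.
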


\begin{proof}
Suppose first that $H_K$ is a cycle.  Its vertices are of two types: facet vertices and nonfree-ridge vertices.  By Lemma~\ref{lem:free-elim}, the only nonzero off-diagonal entries of $S_\mu$ are the incidence entries between a facet and a nonfree ridge.  Hence every edge of $H_K$ joins vertices of different types, while there are no edges between two facets or between two nonfree ridges.  Therefore $H_K$ is bipartite, with the facets and nonfree ridges as its two vertex classes.  The two types of vertices must consequently alternate around the cycle, and there are equally many of each type.  Labeling them successively gives
\[
F_1,R_1,F_2,R_2,\ldots,F_m,R_m,F_1.
\]
Every vertex has degree two.  Hence $R_i$ is contained in exactly $F_i$ and $F_{i+1}$, every facet has precisely the two nonfree ridges $R_{i-1}$ and $R_i$, and the $R_i$ are all the nonfree ridges of $K$.  Moreover $m\ge3$, since two distinct simplices of the same dimension cannot share two distinct ridges.  Therefore $K$ is a simplicial cycle.

Conversely, if $K$ is a simplicial cycle, its definition shows directly that $H_K$ is the cycle
\[
F_1,R_1,F_2,R_2,\ldots,F_m,R_m,F_1.
\]
\end{proof}

\begin{lemma}\label{lem:ridge-star}
The reduced incidence graph $H_K$ is a star if and only if $K$ is a ridge-star.
\end{lemma}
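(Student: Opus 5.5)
The argument mirrors the proof of Lemma~\ref{lem:simplicial-cycle} and rests on the description of $H_K$ from Lemma~\ref{lem:free-elim}. The vertices of $H_K$ are the facets together with the nonfree ridges. Its edges join a facet $F$ to a nonfree ridge $\tau$ exactly when $\tau\subset F$. Hence $H_K$ is bipartite with classes $K_d$ and $\mathcal N$. By Lemma~\ref{lem:HK-diag} it is connected, and it has at least two facet vertices. I take ``star'' to mean $K_{1,s}$ with $s\ge1$, consistent with the usage in Theorem~\ref{thm:Hermitian-q}.

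For the forward direction, suppose $H_K$ is a star. Since $K$ has at least two facets and is ridge-connected, some two facets share a ridge. That ridge is nonfree, so $\mathcal N\ne\varnothing$. A star is a tree, so no two vertices of the same bipartition class are adjacent. Hence the center is the unique vertex of one class and every leaf lies in the other class. There are two cases.

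If the center were a facet $F$, then all other vertices would be leaves, so they would all be nonfree ridges. Then $F$ would be the only facet, contradicting $f_d(K)\ge2$.

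So the center is a nonfree ridge $R$, and every other vertex is a facet. In particular $\mathcal N=\{R\}$, and every facet is adjacent to $R$, that is, $R\subset F$ for every facet $F$. Every ridge other than $R$ is therefore free. Together with the standing hypotheses (pure, ridge-connected, at least two facets), this is exactly the definition of a ridge-star.

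Conversely, let $K$ be a ridge-star with common ridge $R$. Since there are at least two facets and $R$ lies in all of them, $R$ is nonfree. By hypothesis every other ridge is free, so $\mathcal N=\{R\}$. Thus $H_K$ has vertex set $\{R\}\cup K_d$, with an edge $R$--$F$ for every facet $F$ and no other edges. This is the star $K_{1,f_d(K)}$.

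I expect no real obstacle. The only point needing care is excluding a facet as the center, which is where the hypothesis of at least two facets is used.
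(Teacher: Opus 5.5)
Your proof is correct and is essentially the paper's argument: the center of the star must be the unique nonfree ridge $R$, so every facet contains $R$ and every other ridge is free, and the converse is read off directly from the bipartite description of $H_K$. The only difference is cosmetic: you exclude a facet as the center because that would leave only one facet, whereas the paper notes that nonfree-ridge vertices have degree at least two and so cannot be leaves. One small slip in justification: the leaves lie in the class opposite the center because each leaf is adjacent to the center, not because a star is a tree.
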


\begin{proof}
Suppose that $H_K$ is a star.  Every nonfree-ridge vertex has degree equal to its upper degree, which is at least two, so no such vertex can be a leaf.  Hence the center of the star is a nonfree-ridge vertex, say $R$, and every leaf is a facet vertex.  It follows that every facet contains $R$ and contains no other nonfree ridge.  Thus all ridges other than $R$ are free, and $K$ is a ridge-star.

Conversely, if $K$ is a ridge-star with common ridge $R$, then the nonfree ridges consist only of $R$.  In $H_K$, the vertex $R$ is adjacent to every facet vertex, and there are no other edges.  Therefore $H_K$ is a star.
\end{proof}

\begin{lemma}\label{lem:parameters}
For a pure ridge-connected $d$-complex $K$ with at least two facets,
\[
c(H_K)=\theta_d(K),\qquad
p(H_K)=p_d(K),\qquad
q(H_K)=q_d(K).
\]
\end{lemma}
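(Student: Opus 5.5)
The plan is to compute each of the three parameters of $H_K$ directly from its description in Lemma~\ref{lem:free-elim]}: the vertices are the $f_d(K)$ facets together with the $|\mathcal N|$ nonfree ridges, and a facet $F$ is adjacent to a nonfree ridge $\tau$ exactly when $\tau\subset F$. By Lemma~\ref{lem:HK-diag}, $H_K$ is connected, so $c(H_K)=|E(H_K)|-|V(H_K)|+1$.

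\textbf{Cyclomatic number.} We count edges by facets. Each facet has $d+1$ ridges, of which $r_F$ are free, so it meets $d+1-r_F$ nonfree ridges. Hence
\[
|E(H_K)|=(d+1)f_d(K)-\sum_F r_F .
\]
Each free ridge lies in exactly one facet, so $\sum_F r_F=\ell=f_{d-1}(K)-|\mathcal N|$. Therefore
\[
c(H_K)=(d+1)f_d-f_{d-1}+|\mathcal N|-(f_d+|\mathcal N|)+1=d f_d-f_{d-1}+1=\theta_d(K).
\]

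\textbf{Pendant vertices.} A nonfree-ridge vertex has degree equal to its upper degree, which is at least $2$, so it is never pendant. A facet vertex has degree $d+1-r_F$, and this equals $1$ exactly when $r_F=d$, i.e. when $F$ is pendant. Thus $p(H_K)=p_d(K)$.

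\textbf{Quasi-pendant vertices.} We use the graph convention that a quasi-pendant vertex is one adjacent to a pendant vertex. In $H_K$ every pendant vertex is a pendant facet, and its unique neighbour is its single nonfree ridge. So the quasi-pendant vertices of $H_K$ are exactly the nonfree ridges contained in some pendant facet, which are the quasi-pendant ridges. Hence $q(H_K)=q_d(K)$.

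The only delicate point is a possible degenerate case in which a facet vertex could be isolated ($r_F=d+1$), which would spoil the degree count. This cannot occur, because ridge-connectedness and the existence of at least two facets force every facet to share a ridge with another facet, so every facet has degree at least $1$ in $H_K$. Otherwise the proof is routine bookkeeping.
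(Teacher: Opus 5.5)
Your proof is correct and follows the paper's argument essentially step for step: the same edge count ($(d+1)f_d(K)$ facet--ridge incidences minus the one incidence contributed by each free ridge), the same degree analysis showing that the pendant vertices of $H_K$ are exactly the pendant facets, and the same identification of their neighbours with the quasi-pendant ridges. Your closing remark about isolated facet vertices matches the paper's observation that every facet has a nonfree ridge. It is not actually needed for the pendant count, though, since a facet with $r_F=d+1$ would be pendant neither in $H_K$ nor in $K$.
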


\begin{proof}
Let $r$ be the number of free ridges.  The vertices of $H_K$ consist of all $f_d(K)$ facets and the $f_{d-1}(K)-r$ nonfree ridges, so
\[
|V(H_K)|=f_d(K)+f_{d-1}(K)-r.
\]
There are $(d+1)f_d(K)$ total facet--ridge incidences.  Each free ridge contributes exactly one such incidence, and all of these $r$ incidences are absent from $H_K$.  Therefore
\[
|E(H_K)|=(d+1)f_d(K)-r.
\]
By Lemma~\ref{lem:HK-diag}, $H_K$ is connected.  Hence
\[
\begin{aligned}
c(H_K)
&=|E(H_K)|-|V(H_K)|+1\\
&=d f_d(K)-f_{d-1}(K)+1
=\theta_d(K).
\end{aligned}
\]

A nonfree-ridge vertex of $H_K$ has degree $\deg_K(\tau)\ge2$, and so it is never pendant.  A facet vertex $F$ has degree equal to the number of nonfree ridges of $F$.  Since $K$ is ridge-connected and has at least two facets, every facet has at least one nonfree ridge.  Therefore $F$ is pendant in $H_K$ exactly when it has exactly one nonfree ridge, equivalently exactly $d$ free ridges.  Hence
\[
p(H_K)=p_d(K).
\]

All pendant vertices of $H_K$ are thus facet vertices.  A vertex of $H_K$ is quasi-pendant precisely when it is a nonfree ridge adjacent to at least one pendant facet.  These are exactly the quasi-pendant ridges of $K$, and hence
\[
q(H_K)=q_d(K).
\]
\end{proof}

\section{Main multiplicity bounds}

\begin{theorem}\label{thm:main-p}
Let $K$ be a finite pure ridge-connected $d$-dimensional simplicial complex with at least two facets.  Every positive eigenvalue $\lambda$ of $L^{\mathrm{up}}_{d-1}(K)$ satisfies
\[
m_K(\lambda)\le2\theta_d(K)+p_d(K).
\]
If $(\theta_d(K),p_d(K))\ne(1,0)$, then
\[
m_K(\lambda)\le2\theta_d(K)+p_d(K)-1.
\]
Moreover, the following are equivalent:
\begin{enumerate}
\item[(i)] The first bound is attained by some positive eigenvalue;
\item[(ii)] $K$ is a simplicial cycle;
\item[(iii)] $(\theta_d(K),p_d(K))=(1,0)$.
\end{enumerate}
\end{theorem}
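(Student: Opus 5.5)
The plan is to chain the reductions of the previous section and then invoke Theorem~\ref{thm:CGW}. By Lemma~\ref{lem:linearization} and Lemma~\ref{lem:free-elim}, for $\mu=\sqrt\lambda>0$ we have
\[
m_K(\lambda)=\nullity(\mathcal A_K-\mu I)=\nullity(S_\mu).
\]
Here $S_\mu$ is a real symmetric matrix whose off-diagonal graph is $H_K$, and $H_K$ is connected by Lemma~\ref{lem:HK-diag}. Thus $S_\mu\in\mathcal S(H_K)$, and $m_K(\lambda)=\mult_{S_\mu}(0)$. Applying Theorem~\ref{thm:CGW} with $\nu=0$ and using Lemma~\ref{lem:parameters} to translate $c(H_K)=\theta_d(K)$ and $p(H_K)=p_d(K)$ gives the first bound. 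When $H_K$ is not a cycle, the same theorem gives the bound reduced by one. (Note that $H_K$ has at least three vertices, since there are at least two facets and, by ridge-connectedness, at least one nonfree ridge.)

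For the refined bound, I need that $(\theta_d(K),p_d(K))\neq(1,0)$ forces $H_K$ not to be a cycle. This is immediate, since a cycle has $c=1$ and $p=0$, and Lemma~\ref{lem:parameters} transfers these values. Conversely, if $(\theta_d,p_d)=(1,0)$, then $H_K$ is a connected graph with cyclomatic number $1$ and no pendant vertices. Such a graph is a cycle: a unicyclic graph with a tree hanging off the cycle would have a leaf. By Lemma~\ref{lem:simplicial-cycle}, this gives (iii)$\Leftrightarrow$(ii).

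For the equivalences, (i)$\Rightarrow$(iii) holds because attaining $2\theta_d+p_d$ contradicts the refined bound unless $(\theta_d,p_d)=(1,0)$. For (ii)$\Rightarrow$(i), I must exhibit a positive eigenvalue of multiplicity $2$ when $K$ is a simplicial cycle with $m\ge3$ facets. This is the main obstacle, since the equality clause of Theorem~\ref{thm:CGW} only says equality forces multiplicity $2$; it does not produce an eigenvalue with multiplicity $2$.

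To find one, I will compute the spectrum directly. Since $H_K$ is the cycle $F_1R_1\cdots F_mR_m$, the matrix $S_\mu$ has diagonal $-\mu$ on ridges and $(d-1-\lambda)/\mu$ on facets, and its off-diagonal entries are $\pm1$. Alternatively, I can work with $L^{\mathrm{up}}$ itself. Each $F_i$ contributes a rank-one term $b_ib_i^{\mathsf T}$, where $b_i$ is supported on its $d+1$ ridges, so the nonfree part behaves like a signed cycle Laplacian. The free ridges of $F_i$ decouple in the $\partial_d^{\mathsf T}\partial_d$ picture: $\partial_d^{\mathsf T}\partial_d$ is the $m\times m$ matrix with diagonal $d+1$ and entries $\pm1$ between cyclically consecutive facets, that is, $(d+1)I$ plus a signed cycle adjacency matrix. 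A diagonal sign change conjugates a signed cycle to the ordinary cycle $C_m$ or to its antibalanced version. Its eigenvalues are then $2\cos(2\pi j/m)$ or $2\cos((2j+1)\pi/m)$, and these have multiplicity $2$ for suitable $j$ whenever $m\ge3$ (for instance $j=1$ in the balanced case, and $j=0$ in the other case when $m\ge3$). This yields eigenvalues $d+1+2\cos(\cdot)$ of $\partial_d^{\mathsf T}\partial_d$ of multiplicity $2$. Each is positive because $d\ge1$, so it is a nonzero eigenvalue shared with $L^{\mathrm{up}}_{d-1}=\partial_d\partial_d^{\mathsf T}$ with the same multiplicity. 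This gives (i), completing the cycle of equivalences.
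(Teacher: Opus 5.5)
Your proposal is correct and follows the paper's proof essentially step for step. Both use the reduction $m_K(\lambda)=\nullity(S_\mu)$, then Theorem~\ref{thm:CGW} on the connected graph $H_K$ together with Lemma~\ref{lem:parameters}, the cycle characterization via Lemma~\ref{lem:simplicial-cycle}, and the explicit signed-cycle spectrum of $B_d^{\mathsf T}B_d=(d+1)I+A^s(C_m)$ for (ii)$\Rightarrow$(i). Two small points: for $d=1$, positivity of $d+1+2\cos(\cdot)$ needs the chosen angles to be strictly less than $\pi$, not just $d\ge1$. Also, exactness of the multiplicity two follows from the first bound, as in the paper.
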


\begin{proof}
Put $\mu=\sqrt\lambda$.  Lemmas~\ref{lem:linearization} and \ref{lem:free-elim} give
\[
m_K(\lambda)=\nullity(S_\mu).
\]
By Lemmas~\ref{lem:free-elim} and \ref{lem:HK-diag}, $S_\mu$ is a real symmetric matrix whose graph is the connected graph $H_K$.  Therefore Theorem~\ref{thm:CGW} and Lemma~\ref{lem:parameters} give
\[
m_K(\lambda)\le2c(H_K)+p(H_K)
=2\theta_d(K)+p_d(K).
\]
If $(\theta_d(K),p_d(K))\ne(1,0)$, then $H_K$ cannot be a cycle.  The one-unit improvement now follows from Theorem~\ref{thm:CGW}.

By Lemma~\ref{lem:simplicial-cycle}, $H_K$ is a cycle exactly when $K$ is a simplicial cycle.  Since $H_K$ is connected, it is a cycle exactly when $c(H_K)=1$ and $p(H_K)=0$.  Lemma~\ref{lem:parameters} therefore gives
\[
H_K\text{ is a cycle}
\quad\Longleftrightarrow\quad
K\text{ is a simplicial cycle}
\quad\Longleftrightarrow\quad
(\theta_d(K),p_d(K))=(1,0).
\]
This proves (ii)$\Longleftrightarrow$(iii).

If equality holds for some positive eigenvalue, then equality holds in Theorem~\ref{thm:CGW} for $S_{\sqrt\lambda}$.  Hence $H_K$ is a cycle, proving (i)$\Rightarrow$(ii).

Conversely, suppose that $K$ is a simplicial cycle with $m\ge3$ facets.  Its facet Gram matrix has the form
\[
B_d^{\mathsf T}B_d=(d+1)I+A^s(C_m),
\]
where $A^s(C_m)$ is a signed cycle adjacency matrix.  After switching signs, its spectrum is either
\[
\left\{d+1+2\cos\frac{2\pi j}{m}:0\le j<m\right\}
\]
or
\[
\left\{d+1+2\cos\frac{(2j+1)\pi}{m}:0\le j<m\right\}.
\]
In the first case the indices $j=1$ and $j=m-1$ give the same positive eigenvalue, while in the second case the indices $j=0$ and $j=m-1$ do so.  These indices are distinct because $m\ge3$.  Thus there is a positive eigenvalue of multiplicity at least two.  The positive spectra of $B_d^{\mathsf T}B_d$ and $B_dB_d^{\mathsf T}$ agree, while Theorem~\ref{thm:main-p} and Lemma~\ref{lem:parameters} force this multiplicity to be exactly
\[
2=2\theta_d(K)+p_d(K).
\]
Hence equality holds, proving (ii)$\Rightarrow$(i).
\end{proof}

\begin{corollary}\label{cor:equality-spectrum}
Let $K$ be a simplicial cycle with cyclically ordered facets $F_1,\ldots,F_m$.  Put
\[
s_i=\left\langle\partial_dF_i,\partial_dF_{i+1}\right\rangle,
\qquad
\varepsilon(K)=\prod_{i=1}^m s_i.
\]
The equality eigenvalues, listed without repetition, are
\[
\lambda_j=d+1+2\cos\frac{2\pi j}{m},
\qquad
1\le j\le\left\lceil\frac m2\right\rceil-1,
\]
if $\varepsilon(K)=1$, and
\[
\lambda_j=d+1+2\cos\frac{(2j+1)\pi}{m},
\qquad
0\le j\le\left\lfloor\frac m2\right\rfloor-1,
\]
if $\varepsilon(K)=-1$.  Each has multiplicity two.
\end{corollary}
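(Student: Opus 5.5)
Since the positive spectra of $B_d^{\mathsf T}B_d$ and $B_dB_d^{\mathsf T}=L^{\mathrm{up}}_{d-1}(K)$ coincide with multiplicities, I will compute the spectrum of the facet Gram matrix $B_d^{\mathsf T}B_d$ explicitly. Its diagonal entries are $\langle\partial_dF_i,\partial_dF_i\rangle=d+1$. An off-diagonal entry $\langle\partial_dF_i,\partial_dF_k\rangle$ is nonzero only when $F_i,F_k$ share a ridge, and then equals $\pm1$. In a simplicial cycle the only shared ridges are $R_i=F_i\cap F_{i+1}$, each lying in exactly two facets, so distinct facets share at most one ridge. Hence $B_d^{\mathsf T}B_d=(d+1)I+A^s$, where $A^s$ is the signed adjacency matrix of the cycle $C_m$ with edge signs $s_i$.

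Next I will apply diagonal switching. Conjugating by $\operatorname{diag}(\pm1)$, i.e. reorienting $F_2,\dots,F_m$ successively, I can make $s_1=\dots=s_{m-1}=1$. The last sign $s_m$ then equals the switching invariant $\varepsilon(K)$, which is unchanged by reorientation since each facet appears in two factors. If $\varepsilon=1$, $A^s$ is the ordinary cycle adjacency, with eigenvalues $2\cos(2\pi j/m)$, $0\le j<m$. If $\varepsilon=-1$, it is the twisted cycle: the eigenvectors $x_k=\omega^k$ with $\omega^m=-1$ give eigenvalues $2\cos((2j+1)\pi/m)$, $0\le j<m$.

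Then I will read off the multiplicity-two eigenvalues. By Theorem~\ref{thm:main-p}, every positive eigenvalue has multiplicity at most $2$, and the equality eigenvalues are exactly those of multiplicity two. Note that all eigenvalues are at least $d-1\ge0$, and for $d=1$ any zero eigenvalue is excluded.
\begin{itemize}
\item \textbf{Case $\varepsilon=1$.} The value $\cos(2\pi j/m)$ coincides for $j$ and $m-j$. The distinct paired values are $1\le j\le\lceil m/2\rceil-1$. The value $j=0$ is simple, and $j=m/2$ is simple when $m$ is even.
\item \textbf{Case $\varepsilon=-1$.} The value for $j$ coincides with the value for $m-1-j$. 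The pairs are $0\le j\le\lfloor m/2\rfloor-1$, and $j=(m-1)/2$ gives the simple eigenvalue $-1$ when $m$ is odd.
\end{itemize}
Distinctness of the listed values follows from the strict monotonicity of cosine on $[0,\pi]$.

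The main obstacle is positivity when $d=1$. A listed value can vanish only if $d+1+2\cos(\cdot)=0$, which forces $d=1$ and $\cos=-1$. That happens only at $j=m/2$ in the first case, or at the odd-$m$ middle index in the second, and both are simple, unlisted eigenvalues. So every listed eigenvalue is positive and has multiplicity exactly two, which completes the proof.
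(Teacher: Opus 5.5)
Your proposal is correct and follows essentially the same route as the paper. You identify $B_d^{\mathsf T}B_d=(d+1)I+A^s(C_m)$, switch to the all-positive or once-twisted cycle according to $\varepsilon(K)$, read off the Fourier spectra, pair $j$ with $m-j$ (resp.\ $m-1-j$), check that the paired angles lie strictly in $(0,\pi)$ so the values are positive, and use Theorem~\ref{thm:main-p} (bound $2$) to identify multiplicity two with equality.

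One small slip: in the twisted case with $m$ odd, the unpaired index $j=(m-1)/2$ has cosine value $-1$, so it gives the simple Gram eigenvalue $d-1$, not an eigenvalue $-1$.
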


\begin{proof}
Consecutive facets share exactly one ridge, so $s_i\in\{1,-1\}$, while nonconsecutive facets share no ridge.  Thus
\[
B_d^{\mathsf T}B_d=(d+1)I+A^s(C_m).
\]
Reorienting a facet changes two adjacent signs, and reorienting a ridge changes neither their product nor the Gram matrix.  Hence $\varepsilon(K)$ is orientation-independent.  Switching reduces the signed cycle to the all-positive cycle when $\varepsilon(K)=1$ and to a cycle with one negative edge when $\varepsilon(K)=-1$.  The corresponding Fourier spectra are the two spectra displayed in the proof of Theorem~\ref{thm:main-p}.  Pairing the indices $j$ with $m-j$ in the first case and with $m-1-j$ in the second gives precisely the stated double eigenvalues; the remaining indices are unpaired.  Their angles lie strictly between $0$ and $\pi$, so they are positive.  Since $2\theta_d(K)+p_d(K)=2$, these and only these eigenvalues attain equality.
\end{proof}

The preceding bound can be improved further by replacing the number of pendant facets with the number of quasi-pendant ridges.  For this sharper estimate, however, the exceptional eigenvalue $\lambda=d$ must be excluded.

\begin{theorem}\label{thm:main-q}
Let $K$ be a finite pure ridge-connected $d$-dimensional simplicial complex with at least two facets.  Let $\lambda>0$ be an eigenvalue of $L^{\mathrm{up}}_{d-1}(K)$ with $\lambda\ne d$.
Then
\[
m_K(\lambda)\le2\theta_d(K)+q_d(K).
\]
If $K$ is neither a ridge-star nor a complex satisfying
\[
(\theta_d(K),p_d(K))=(1,0),
\]
then
\[
m_K(\lambda)\le2\theta_d(K)+q_d(K)-1.
\]
Moreover, the following are equivalent:
\begin{enumerate}
\item[(i)] The first bound is attained by some positive eigenvalue $\lambda\ne d$;
\item[(ii)] $K$ is either a ridge-star or a simplicial cycle.
\end{enumerate}
\end{theorem}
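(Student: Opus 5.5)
The plan is to reduce Theorem~\ref{thm:main-q} to Theorem~\ref{thm:Hermitian-q}, applied to the matrix $S_\mu$ with $\mu=\sqrt\lambda$ and eigenvalue $\nu=0$. By Lemmas~\ref{lem:linearization} and \ref{lem:free-elim}, $m_K(\lambda)=\nullity(S_\mu)=\mult_{S_\mu}(0)$. Moreover, $S_\mu$ is a real symmetric matrix in $\mathcal S(H_K)$, and $H_K$ is connected by Lemma~\ref{lem:HK-diag}. The pendant vertices of $H_K$ are exactly the pendant facets, because every nonfree-ridge vertex has degree at least two. By Lemma~\ref{lem:HK-diag}, each such vertex has diagonal entry $(d-\lambda)/\sqrt\lambda$, which is nonzero precisely because $\lambda\ne d$. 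This is the role of the excluded eigenvalue: it supplies the hypothesis $a_{vv}\ne\nu$ of Theorem~\ref{thm:Hermitian-q}.

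Next I check that $H_K$ has at least three vertices. Since $K$ has at least two facets and is ridge-connected, some nonfree ridge exists, so $H_K$ contains at least two facets and one ridge. Theorem~\ref{thm:Hermitian-q} together with Lemma~\ref{lem:parameters} then gives $m_K(\lambda)\le 2c(H_K)+q(H_K)=2\theta_d(K)+q_d(K)$. For the refinement, I use that $H_K$ is a star if and only if $K$ is a ridge-star (Lemma~\ref{lem:ridge-star}). I also use that $H_K$ is a cycle if and only if $K$ is a simplicial cycle, which in turn holds if and only if $(\theta_d(K),p_d(K))=(1,0)$; this chain follows from Lemma~\ref{lem:simplicial-cycle} and the equivalence (ii)$\Leftrightarrow$(iii) in Theorem~\ref{thm:main-p}. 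So if $K$ is neither, the second clause of Theorem~\ref{thm:Hermitian-q} yields the bound reduced by one.

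For the equality characterization, (i)$\Rightarrow$(ii) is immediate by contraposition from the refinement. For (ii)$\Rightarrow$(i) I treat the two cases separately.

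\emph{Simplicial cycle.} Here $q_d(K)=0=p_d(K)$ and $\theta_d(K)=1$, so the bound is $2$. Corollary~\ref{cor:equality-spectrum} provides positive eigenvalues of multiplicity two. I must make sure one of them differs from $d$. The listed values are $d+1+2\cos\phi$, and such a value equals $d$ only when $\cos\phi=-1/2$. For $m\ge3$ I expect at least two distinct double eigenvalues, or a single one that is not $d$; for instance $j=1$ gives $\cos(2\pi/m)\ne-1/2$ unless $m=3$. So I will check the small cases $m=3,4$ by hand, and in general choose the smallest index $j$, for which $\cos\phi$ is close to $1$.

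\emph{Ridge-star.} Let there be $k\ge2$ facets containing the common ridge $R$. Then $\theta_d(K)=d k-(1+dk)+1=0$ and $q_d(K)=1$, so the bound is $1$. I only need a positive eigenvalue $\ne d$. The facet Gram matrix is $B_d^{\mathsf T}B_d=(d+1)I\pm$ a rank-one signed all-ones correction; after switching it becomes $dI+J$. Its eigenvalues are $d$ (multiplicity $k-1$) and $d+k$, and $d+k$ is positive and different from $d$.

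The main obstacle I foresee is the simplicial-cycle case, namely excluding that every double eigenvalue coincides with $d$. The only candidates equal to $d$ are angles $2\pi/3$ or $4\pi/3$, so in each sign case I will exhibit an index with a different cosine. If $m=3$ with $\varepsilon=1$ (only $j=1$, value $d$) this fails. In that case I would verify directly that such a complex cannot exist or cannot have that sign: three facets pairwise sharing ridges lie in a common $(d+1)$-simplex's boundary, which forces $\varepsilon=-1$ with value $d+1+2\cos(\pi/3)=d+2\ne d$.
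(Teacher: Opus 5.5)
Your proposal is correct and follows the paper's proof essentially step for step. The shared steps are the reduction $m_K(\lambda)=\nullity(S_{\sqrt\lambda})$, Theorem~\ref{thm:Hermitian-q} at $\nu=0$ with $\lambda\ne d$ making the pendant-facet diagonal entries $(d-\lambda)/\sqrt\lambda$ nonzero, the star/cycle identifications for the refinement, and, for equality, $B_d^{\mathsf T}B_d=dI+J$ for ridge-stars together with the common $(d+1)$-simplex argument excluding $m=3$, $\varepsilon(K)=1$ for simplicial cycles. In that last argument, note that the distinctness of the three shared ridges is what forces $F_3=(R\setminus\{c\})\cup\{a,b\}$, and your explicit check that $H_K$ has at least three vertices, needed for Theorem~\ref{thm:Hermitian-q}, is a detail the paper leaves implicit.
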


\begin{proof}
As above,
\[
m_K(\lambda)=\nullity(S_\mu),\qquad \mu=\sqrt\lambda,
\]
and $S_\mu$ has graph $H_K$.

By Lemma~\ref{lem:parameters}, the pendant vertices of $H_K$ are exactly the pendant facets of $K$.  If $F$ is such a facet, Lemma~\ref{lem:HK-diag} gives
\[
(S_\mu)_{FF}=\frac{d-\lambda}{\sqrt\lambda}\ne0.
\]
Thus the hypothesis of Theorem~\ref{thm:Hermitian-q} is satisfied for the matrix $S_\mu$ and the eigenvalue $0$.  Consequently
\[
\begin{aligned}
m_K(\lambda)
&\le2c(H_K)+q(H_K)\\
&=2\theta_d(K)+q_d(K),
\end{aligned}
\]
using Lemma~\ref{lem:parameters}.

The one-unit refinement in Theorem~\ref{thm:Hermitian-q} can fail only when $H_K$ is a star or a cycle.  By Lemma~\ref{lem:ridge-star}, the star case is equivalent to $K$ being a ridge-star.  By Lemma~\ref{lem:parameters}, the cycle case is equivalent to $(\theta_d(K),p_d(K))=(1,0)$.  The claimed refinement follows.

We now characterize equality.  If the first bound is attained for some $\lambda\ne d$, then the one-unit refinement shows that $H_K$ must be a star or a cycle.  Lemmas~\ref{lem:ridge-star} and \ref{lem:simplicial-cycle} show that $K$ is respectively a ridge-star or a simplicial cycle.  This proves (i)$\Rightarrow$(ii).

Conversely, suppose first that $K$ is a ridge-star with $m\ge2$ facets.  After orienting the facets so that their coefficients on the common ridge agree,
\[
B_d^{\mathsf T}B_d=dI_m+J_m.
\]
Thus $\lambda=d+m$ has multiplicity one.  Since $\theta_d(K)=0$ and $q_d(K)=1$, we have
\[
m_K(d+m)=1=2\theta_d(K)+q_d(K),
\]
and $d+m\ne d$.

Finally, let $K$ be a simplicial cycle with $m\ge3$ facets.  Then $\theta_d(K)=1$ and $q_d(K)=0$, so the first bound equals two.  Corollary~\ref{cor:equality-spectrum} supplies eigenvalues of multiplicity two.  At least one of them differs from $d$: if $\varepsilon(K)=-1$, the index $j=0$ gives
\[
\lambda_0=d+1+2\cos\frac{\pi}{m}>d;
\]
if $\varepsilon(K)=1$ and $m\ge4$, the index $j=1$ gives
\[
\lambda_1=d+1+2\cos\frac{2\pi}{m}>d.
\]
The remaining formal possibility $m=3$ and $\varepsilon(K)=1$ cannot occur.  Indeed, write $F_1=R\cup\{a\}$ and $F_2=R\cup\{b\}$, where $R=F_1\cap F_2$.  Since the other two shared ridges are distinct from $R$, the third facet must have the form
\[
F_3=(R\setminus\{c\})\cup\{a,b\}
\]
for some $c\in R$.  Thus $F_1,F_2,F_3$ are codimension-one faces of the common $(d+1)$-simplex on $R\cup\{a,b\}$.  Orienting them as boundary facets of that simplex, the identity $\partial_d\partial_{d+1}=0$ shows that the two facet boundaries induce opposite signs on each shared ridge.  Hence all three cycle signs are $-1$ and $\varepsilon(K)=-1$.  Therefore every simplicial cycle has an equality eigenvalue distinct from $d$, proving (ii)$\Rightarrow$(i).
\end{proof}

When $d=1$, a pure ridge-connected simplicial complex with at least two facets is a connected graph with at least two edges.  Its ridges are vertices, and
\[
\theta_1(G)=c(G),\qquad
p_1(G)=p(G),\qquad
q_1(G)=q(G),\qquad
L^{\mathrm{up}}_0(G)=L(G).
\]
Here $p(G)$ and $q(G)$ denote the numbers of pendant and quasi-pendant vertices, respectively.

\begin{corollary}\label{cor:graph-p}
Let $G$ be a connected graph with at least two edges.  Every positive Laplacian eigenvalue $\lambda$ satisfies
\[
m_G(\lambda)\le2c(G)+p(G).
\]
If $G$ is not a cycle, then
\[
m_G(\lambda)\le2c(G)+p(G)-1.
\]
Equality in the first bound is attained by some positive eigenvalue if and only if $G$ is a cycle.
\end{corollary}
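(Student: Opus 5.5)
This corollary is the case $d=1$ of Theorem~\ref{thm:main-p}, so I will specialize that theorem rather than argue from scratch.

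Let $G$ be a connected graph with at least two edges, viewed as a pure $1$-dimensional complex $K$. Its facets are the edges and its ridges are the vertices. Ridge-connectedness means that any two edges are joined by a chain of edges in which consecutive edges share a vertex. This holds because $G$ is connected: isolated vertices are excluded by purity, so every vertex lies on an edge. Since $G$ has at least two edges, $K$ has at least two facets, and Theorem~\ref{thm:main-p} applies.

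Next I translate the parameters, as recorded just before the statement.
\begin{itemize}
\item $\theta_1(K)=f_1-f_0+1=c(G)$.
\item A facet is pendant exactly when one of its two endpoints has degree one and the other does not. Two adjacent degree-one vertices would force $G=K_2$, which the two-edge hypothesis rules out, so $p_1(K)$ counts pendant vertices and equals $p(G)$.
\item $L_0^{\mathrm{up}}=L(G)$, so $m_K(\lambda)=m_G(\lambda)$.
\end{itemize}
The first inequality of Theorem~\ref{thm:main-p} then reads $m_G(\lambda)\le 2c(G)+p(G)$.

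For the refinement, I need $(c(G),p(G))=(1,0)$ if and only if $G$ is a cycle. Equivalently, by Theorem~\ref{thm:main-p}(ii)$\Leftrightarrow$(iii), I need that a one-dimensional simplicial cycle is exactly a graph cycle $C_m$ with $m\ge3$.
\begin{itemize}
\item In a $1$-dimensional simplicial cycle there are no free ridges, since each facet has exactly two nonfree ridges and $d-1=0$ free ones. Each vertex $R_i$ lies in exactly two edges, so the graph is $C_m$ with $m\ge3$.
\item Conversely, $C_m$ with $m\ge3$ satisfies conditions (i)--(iii) of the definition.
\end{itemize}
Alternatively, connectivity gives the equivalence directly: $c(G)=1$ means $G$ is unicyclic, and a unicyclic graph with no pendant vertices is a cycle. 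So if $G$ is not a cycle, the sharpened bound $m_G(\lambda)\le 2c(G)+p(G)-1$ follows.

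Finally, the equality statement is (i)$\Leftrightarrow$(ii) of Theorem~\ref{thm:main-p}, transported through the same identification. Every step is direct. The only point needing care, and so the main obstacle, is the check that pendant facets correspond to pendant vertices, which is exactly where the $K_2$ exclusion is used.
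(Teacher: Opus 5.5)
Your proposal is correct and follows the paper's own argument: specialize Theorem~\ref{thm:main-p} to $d=1$ and observe that one-dimensional simplicial cycles are exactly the cycle graphs $C_m$ with $m\ge3$. You also write out the checks the paper leaves implicit, namely ridge-connectedness, $\theta_1(K)=c(G)$, and the identification $p_1(K)=p(G)$ via the exclusion of $G=K_2$.
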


\begin{proof}
Apply Theorem~\ref{thm:main-p} with $d=1$.  Simplicial cycles in dimension one are precisely the ordinary cycle graphs.
\end{proof}

\begin{corollary}\label{cor:graph-q}
Let $G$ be a connected graph with at least two edges, and let $\lambda>0$ be a Laplacian eigenvalue with $\lambda\ne1$.  Then
\[
m_G(\lambda)\le2c(G)+q(G).
\]
If $G$ is neither a star nor a cycle, then
\[
m_G(\lambda)\le2c(G)+q(G)-1.
\]
Equality in the first bound is attained by some positive eigenvalue $\lambda\ne1$ if and only if $G$ is a star or a cycle.
\end{corollary}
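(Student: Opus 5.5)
This corollary is the case $d=1$ of Theorem~\ref{thm:main-q}, so I would prove it by specializing that theorem, exactly as Corollary~\ref{cor:graph-p} was obtained from Theorem~\ref{thm:main-p}.

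\textbf{Translating the hypotheses.} View $G$ as a pure $1$-dimensional simplicial complex. Its facets are the edges and its ridges are the vertices. Connectedness of a graph with at least two edges is ridge-connectedness, since two edges are adjacent exactly when they share a vertex. The requirement of at least two facets is the requirement of at least two edges. By the remarks preceding Corollary~\ref{cor:graph-p}, we have $L_0^{\mathrm{up}}(G)=L(G)$, $\theta_1(G)=c(G)$, $p_1(G)=p(G)$ and $q_1(G)=q(G)$. The exceptional eigenvalue $\lambda=d$ becomes $\lambda=1$.

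\textbf{The two bounds.} With these identifications, the first bound of Theorem~\ref{thm:main-q} reads $m_G(\lambda)\le2c(G)+q(G)$. For the refinement, we must translate the excluded cases of Theorem~\ref{thm:main-q}.
\begin{itemize}
\item A ridge-star with $d=1$ is a graph with at least two edges in which one vertex $R$ lies on every edge and every other vertex has degree one. This is precisely a star $K_{1,m}$ with $m\ge2$.
\item The condition $(\theta_1,p_1)=(1,0)$ means $c(G)=1$ and $G$ has no pendant vertices. A connected unicyclic graph without leaves is a cycle. This also agrees with Lemma~\ref{lem:simplicial-cycle}, since $1$-dimensional simplicial cycles are cycle graphs, and the condition $m\ge3$ is automatic for simple graphs.
\end{itemize}
Hence if $G$ is neither a star nor a cycle, the refined bound $m_G(\lambda)\le 2c(G)+q(G)-1$ follows.

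\textbf{Equality.} The equality characterization in Theorem~\ref{thm:main-q} transfers directly. Equality holds for some positive $\lambda\ne1$ if and only if $G$ is a ridge-star or a simplicial cycle, that is, a star or a cycle.

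\textbf{Main obstacle.} The only real point is checking the dictionary carefully. In particular, a star here must have at least two edges, which matches the ridge-star definition requiring at least two facets. The case $K_2$ is excluded by hypothesis, so no degenerate issue arises.
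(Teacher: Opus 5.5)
Your proposal is correct and follows the paper's own proof. The paper also simply applies Theorem~\ref{thm:main-q} with $d=1$, noting that ridge-stars become stars and simplicial cycles (equivalently $(\theta_1,p_1)=(1,0)$) become cycle graphs. Your more explicit dictionary checks fill in details the paper leaves implicit: ridge-connectedness of a connected graph with at least two edges, stars being $K_{1,m}$ with $m\ge2$, and the exclusion of $K_2$.
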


\begin{proof}
Apply Theorem~\ref{thm:main-q} with $d=1$.  Ridge-stars become ordinary star graphs, while simplicial cycles become ordinary cycle graphs.
\end{proof}

\section{A simplicial Faria inequality}

The exceptional eigenvalue $d$ satisfies a lower bound complementary to the preceding upper bound for $\lambda\ne d$.

\begin{theorem}\label{thm:simplicial-faria}
Let $\Upsilon$ be a finite pure $d$-dimensional simplicial complex, where $d\ge1$.  Then
\[
m_{\Upsilon}(d)\ge p_d(\Upsilon)-q_d(\Upsilon).
\]
\end{theorem}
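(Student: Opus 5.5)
Since $m_\Upsilon(d)$ is the dimension of the $d$-eigenspace of $L^{\mathrm{up}}_{d-1}(\Upsilon)=\partial_d\partial_d^{\mathsf T}$, the plan is to write down explicitly a linear space of eigenvectors of dimension at least $p_d(\Upsilon)-q_d(\Upsilon)$. No connectivity or $\theta_d$ is needed, so the argument will be purely local and will not go through $H_K$. For $d=1$ it should reduce to Faria's classical argument: differences of indicator vectors of two pendant vertices hanging from the same vertex.

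\textbf{Step 1 (local eigenvector from one pendant facet).} Fix a pendant facet $F$ with free ridges $\tau_1,\dots,\tau_d$ and unique nonfree ridge $\rho$. Let $x_F:=\partial_d^{\mathsf T}$ applied to nothing; instead take $z_F\in C_{d-1}$ to be the restriction of $\partial_d F$ to its free ridges, i.e.\ $z_F=\partial_d F-\epsilon_F\rho$, where $\epsilon_F=\pm1$ is the coefficient of $\rho$ in $\partial_d F$. Because each $\tau_i$ lies only in $F$, we have $\partial_d^{\mathsf T}z_F=\langle\partial_dF,z_F\rangle e_F=d\,e_F$. Hence $L^{\mathrm{up}}z_F=d\,\partial_dF=d\,z_F+d\,\epsilon_F\rho$. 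This is not yet an eigenvector; the defect $d\epsilon_F\rho$ is concentrated on the quasi-pendant ridge $\rho$.

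\textbf{Step 2 (cancel defects).} For each quasi-pendant ridge $\rho$, let $F_1,\dots,F_k$ be the pendant facets containing $\rho$. For coefficients $c_1,\dots,c_k$ with $\sum_i c_i\epsilon_{F_i}=0$, the vector $z=\sum c_i z_{F_i}$ satisfies $L^{\mathrm{up}}z=dz$. The solution space of this one linear condition has dimension $k-1$. Distinct pendant facets have disjoint free-ridge supports, since free ridges lie in only one facet, so the vectors $z_{F}$ are linearly independent (each is nonzero because $d\ge1$). Therefore the constructions for different $\rho$ and different $c$ combine into an independent family of dimension
\[
\sum_{\rho}\bigl(k_\rho-1\bigr)=p_d(\Upsilon)-q_d(\Upsilon),
\]
using that each pendant facet determines exactly one quasi-pendant ridge.

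\textbf{Step 3 (degenerate cases).} A remaining subtlety is a component consisting of a single facet, or a facet all of whose ridges are free. Neither is pendant, so it contributes nothing and causes no problem. I would also check the case where the unique nonfree ridge is shared with another pendant facet only, e.g.\ $\Upsilon$ is two facets glued along a ridge: there $p=2$, $q=1$, and $z_{F_1}\mp z_{F_2}$ works. The main point to verify carefully is the computation $\partial_d^{\mathsf T}z_F=d\,e_F$, namely that no other facet sees the free ridges; everything else is bookkeeping. Sharpness (for instance, via ridge-stars) is presumably treated separately.
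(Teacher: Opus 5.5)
Your argument is correct and is essentially the paper's proof moved from facets to ridges. The paper shows that every $x$ supported on the pendant facets at a quasi-pendant ridge $R$ with $\sum_F\varepsilon_F x_F=0$ satisfies $\partial_d^{\mathsf T}\partial_d x=dx$. Your vectors are precisely $z=\partial_d x$ with $x_F=c_F$, since $\partial_d x=\sum_F c_F z_F+\bigl(\sum_F c_F\epsilon_F\bigr)\rho$. The paper must transfer the multiplicity of $d>0$ from $\partial_d^{\mathsf T}\partial_d$ to $\partial_d\partial_d^{\mathsf T}$, and it gets independence free from disjoint facet supports. You work directly in $L^{\mathrm{up}}_{d-1}$ and instead get independence from disjoint free-ridge supports, which you justify correctly.
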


\begin{proof}
Choose orientations of the facets and ridges of $\Upsilon$, and put
\[
B=\partial_d,\qquad M=B^{\mathsf T}B.
\]
Since $d>0$, the multiplicity of $d$ as an eigenvalue of $M$ equals its multiplicity as a nonzero eigenvalue of
\[
BB^{\mathsf T}=L_{d-1}^{\mathrm{up}}(\Upsilon).
\]

For each quasi-pendant ridge $R$, let
\[
\mathcal P_R
=\{F\in\Upsilon_d:F\text{ is pendant and its unique nonfree ridge is }R\},
\qquad
t_R=|\mathcal P_R|.
\]
The sets $\mathcal P_R$ partition the pendant facets, and therefore
\[
\sum_R t_R=p_d(\Upsilon),
\]
where the sum is over the $q_d(\Upsilon)$ quasi-pendant ridges.

For $F\in\mathcal P_R$, let $\varepsilon_F\in\{1,-1\}$ be the incidence sign of $R$ in $\partial_dF$, and define
\[
U_R=
\left\{
x\in\R^{\Upsilon_d}:
\operatorname{supp}(x)\subseteq\mathcal P_R,\quad
\sum_{F\in\mathcal P_R}\varepsilon_Fx_F=0
\right\}.
\]
The displayed relation is one nonzero linear equation on the $t_R$ coordinates, so
\[
\dim U_R=t_R-1.
\]

We claim that every $x\in U_R$ satisfies $Mx=dx$.  If $F\in\mathcal P_R$, then the only ridge of $F$ that can belong to another facet is $R$.  Since a $d$-simplex has $d+1$ ridges,
\[
\begin{aligned}
(Mx)_F
&=(d+1)x_F+
\sum_{\substack{G\in\mathcal P_R\\G\ne F}}
\varepsilon_F\varepsilon_Gx_G\\
&=(d+1)x_F+
\varepsilon_F\left(-\varepsilon_Fx_F\right)
=dx_F.
\end{aligned}
\]

Now let $H\notin\mathcal P_R$.  If $H$ does not contain $R$, then it shares no ridge with any facet in $\mathcal P_R$, and hence $(Mx)_H=0$.  If $R\subset H$, and $\varepsilon_H$ denotes the incidence sign of $R$ in $\partial_dH$, then
\[
(Mx)_H
=\varepsilon_H\sum_{F\in\mathcal P_R}\varepsilon_Fx_F
=0.
\]
Since $x_H=0$, both cases give $(Mx)_H=dx_H$.  Thus
\[
U_R\subseteq\ker(M-dI).
\]

For distinct quasi-pendant ridges $R$ and $S$, the sets $\mathcal P_R$ and $\mathcal P_S$ are disjoint.  Hence the spaces $U_R$ have disjoint supports and their sum is direct.  Consequently,
\[
\begin{aligned}
m_{\Upsilon}(d)
&\ge\sum_R\dim U_R\\
&=\sum_R(t_R-1)\\
&=p_d(\Upsilon)-q_d(\Upsilon).
\end{aligned}
\]
\end{proof}

The bound is sharp in every dimension.  Indeed, for a ridge-star with $m$ facets,
\[
p_d(\Upsilon)=m,\qquad q_d(\Upsilon)=1,
\]
and the calculation
\[
B^{\mathsf T}B=dI_m+J_m
\]
gives $m_{\Upsilon}(d)=m-1=p_d(\Upsilon)-q_d(\Upsilon)$.

When $d=1$, facets are edges, ridges are vertices, pendant facets correspond to pendant vertices, and quasi-pendant ridges correspond to quasi-pendant vertices.  Therefore Theorem~\ref{thm:simplicial-faria} becomes
\[
m_G(1)\ge p(G)-q(G),
\]
which is precisely Faria's inequality \cite{Faria}.

\section{Concluding remarks}

We have obtained upper bounds for the multiplicities of nonzero upper-Laplacian eigenvalues of pure ridge-connected simplicial complexes in terms of the numbers of facets, ridges, pendant facets and quasi-pendant ridges.  The equality cases show that simplicial cycles and ridge-stars play the same extremal roles as cycles and stars in the graph setting.  We have also established a simplicial analogue of Faria's inequality for the exceptional eigenvalue $d$.

It would be interesting to investigate whether similar multiplicity bounds hold for normalized or weighted simplicial Laplacians, as well as for the down and full Hodge Laplacians.  Another natural problem is to determine whether stronger bounds can be obtained for other restricted classes of simplicial complexes.

\section*{Declarations}

\noindent
\textbf{Funding.} The author received no financial support for the research, authorship, or publication of this article.

\noindent
\textbf{Conflict of interest.} The author declares no conflict of interest.

\noindent
\textbf{Data availability.} No datasets were generated or analyzed in this study.

\end{document}